\newcommand{\RR}{\mathbb R}
\newfont{\fung}{cmff10 at 15pt}
\newtheorem{theorem}{Theorem}[section]
\newtheorem{lemma}[theorem]{Lemma}
\def\thebiblio#1{\subsection*{References}\list
{[\arabic{enumi}]}{\settowidth\labelwidth{#1.}
\leftmargin\labelwidth
\advance\leftmargin\labelsep\usecounter{enumi}}
\def\newblock{\hskip .11em plus .33em minus -.07em}\sloppy
\sfcode`\.=1000\relax}
\begin{document}

\title[]{Well-posedness and stabilization of a model system
for long waves posed on a quarter plane}






\author{A. F. Pazoto}
\address{Instituto de Matem\'atica, Universidade Federal do Rio de
Janeiro, P.O. Box 68530, CEP 21945-970, Rio de Janeiro, RJ,  Brasil}
\email{ademir@im.ufrj.br}

\author{G. R. Souza}
\address{Centro Federal de Educa\c{c}\~ao Tecnol\'ogica Celso Suckow da Fonseca, Avenida Governador Roberto Silveira, 1900, CEP 28.635-000, Nova Friburgo, RJ, Brasil}
\email{gilmar@im.ufrj.br}

\subjclass{93D15, 35Q53, 93C20} \keywords{Exponential decay, Korteweg-de
Vries equation, Stabilization}

\begin{abstract}
In this paper we are concerned with a initial boundary-value problem for a coupled system of two KdV equations, posed on the positive half line, under the effect of a localized damping term. The model arises when modeling  the  propagation  of  long  waves  generated by a wave maker in a channel. It is shown that the solutions of the system are exponential stable and globally well-posed in the weighted space $L^2(e^{2bx}dx)$ for $b>0$. The stabilization problem is studied using a Lyapunov approach while the well-posedness result is obtained combining fixed point arguments and energy type estimates.
\end{abstract}

\maketitle

\section{Introduction}
We consider the initial-boundary value problem for a coupled system of two Korteweg-de Vries equation posed on the positive half line under the presence of a localized damping represented by a function $a$, that is,
\begin{equation}\label{e1}
\left\{
\begin{array}{l}%
u_{t}+u_{xxx}+a_{3}v_{xxx}+uu_{x}+a_{1}vv_{x}+a_{2}(uv)_{x}+a(x)u=0, \quad x, t\in \mathbb{R}^+\\
b_{1}v_{t}+v_{xxx}+b_{2}a_{3}u_{xxx}+vv_{x}+b_{2}a_{2}uu_{x}+b_{2}
a_{1}(uv)_{x}+rv_{x}+a(x)v=0\\
u(0,t)=v(0,t)=0, \quad t>0 \\
u(x,0)=u_{0}(x), v(x,0)=v_{0}(x), \quad x\in \mathbb{R}^+,\\
\end{array}
\right.
\end{equation}
where $r, a_1, a_2, a_3, b_1, b_2$ are real constants with $b_1, b_2 > 0$.

When $a\equiv 0$ system \eqref{e1} was derived by Gear
and Grimshaw in \cite{gg} as a model to describe strong interactions
of two long internal gravity waves in a stratified fluid, where the
two waves are assumed to correspond to different modes of the
linearized equations of motion. It has the structure of a pair of
KdV equations with both linear and nonlinear coupling terms and has
been object of intensive research in recent years (see, for instance, \cite{ablowitz,alarcon,vep,bona,cerpa-pazoto,davila,li,mp,mo,mop,dugan,pazoto-souza,saut,vera}). This apparently complicated system also appears as a special case of a broad class of nonlinear evolution equations that can also be interpreted as a coupled nonlinear version of Korteweg-de Vries generalized equations of the form
\begin{equation}
\left\{
\begin{array}{l}
u_t + u_{xxx} + f(u,v)_x=0\\
v_t + v_{xxx} + g(u,v)_x=0,\nonumber
\end{array}
\right.
\end{equation}
with $f$ and $g$ satisfying $f(u,v)=H_u(u,v)$ and $g(u,v)=H_v(u,v)$ for a smooth function $H$.
The initial boundary value problems for KdV type models arise
naturally in modeling small-amplitude long waves in a channel with a wavemaker mounted at one end, or in modeling coastal zone motions
generated by long-crested waves propagating shoreward from deep water (see, for instance, \cite{bona-winther}).
Such mathematical formulations have received considerable attention in the past, and a satisfactory theory
on the global well-posedness of solutions is available in the literature.

Along this work we assume that the real-valued function $a =a(x)$ satisfies the condition
\begin{equation}\label{a}
\begin{array}{l}
a \in L^\infty (\mathbb{R}^+) \,\hbox { is a positive function and }\, a(x) \geq a_0 > 0 \, \hbox { a. e. in } \, \omega,
\end{array}
\end{equation}
where $a_0$ is sufficiently large and $\omega$ contains two sets of the form $(0,\delta)$ and $(\beta,\infty)$, for any $0< \delta << 1$ and $\beta > 0$. Observe that in this case the damping term is acting effectively in $\omega$.

We also assume that
$$r, a_1, a_2, a_3, b_1 \mbox{ and } b_2 \mbox{ are real constants with } a_1=a_2,\, 0 < r,\,a^2_3b_2 < < 1\mbox{ and } b_1, b_2 > 0.$$
Assumption $0 < a^2_3b_2 < 1$ combined with some conservations law satisfied by the solutions allow to obtain a priori estimates leading to the global well-posedness results. The constant $r$ is a non-dimensional constant parameter that can be assumed to be very small (see, for instance, \cite{bona,saut}).
Condition $a_1=a_2$ is technical and will be used to simplify some computations.

In order to make more precise the idea we have in mind, let us first consider the total energy associated to \eqref{e1}, in this case
\begin{equation}\label{energy}
E(t) = \frac{1}{2}\int_{\mathbb{R^+}} (b_2u^2 + b_1v^2) dx.
\end{equation}
Under the above conditions, we have
\begin{equation}\label{dissipation}
\begin{array}{l}
\vspace{1mm}\displaystyle\frac{d}{dt}E(t)= - [\,\frac{b_2}{2}u_x^2(0,t) + \frac{1}{2}v_x^2(0,t) +
a_3b_2u_x(0,t)v_x(0,t)]-\displaystyle\int_{\mathbb{R}^+} a(x)(b_2u^2 + v^2) dx\\
=
-\displaystyle\frac{1}{2}\left(\sqrt{b_{2}}u_{x}(0,t)+\sqrt{a^{2}_{3}b_{2}}v_{x}(0,t)\right)^{2}
-\displaystyle\frac{1}{2}\left(1-a^{2}_{3}b_{2}\right)v^{2}_{x}(0,t)-\displaystyle\int_{\mathbb{R}^+} a(x)(b_2u^2 + v^2)
    dx \leq 0.
\end{array}
\end{equation}
This indicates that terms $a(x)u$ and $a(x)v$ in the equations plays the role of a feedback damping mechanism.
So, in the light of the computations performed above, we can see that \eqref{e1} generates a flow that can be
continued indefinitely in the temporal variable, defining a solution valid for all $t\geq 0$. Moreover, taking \eqref{dissipation} into account,
the internal stabilization problem was addressed by the authors in \cite{pazoto-souza}
and a locally exponential decay result was derived. In this context, it is important to note that, even when
$a\equiv 0$ or $\omega = \emptyset$, the energy is decreasing and is strictly so as long as $u_x(0,t), v_x(0,t)\neq 0$.
This suggests that any solution of the model may converges to zero exponentially in the absence of the internal damping.
However, this may not always be the case, since the dissipation due to the boundary terms does not seem to be strong enough
to guarantee the exponential decay of solutions, including the case of the scalar KdV equation. In addition, negative result probably holds when the support of $a$ is a finite interval.


Here, two mathematical issues connected to \eqref{e1} will be addressed; well-posedness and large-time behavior of solutions in a weighted space introduced by Kato in \cite{kato}. More precisely, for $b>0$ we prove that the solutions are globally well-posed and decay exponentially to zero in $[L^2_b(\mathbb{R}^+)]^2$, where
 \begin{equation}\label{kato}
 L^2_b(\mathbb{R}^+)=\{u : \mathbb{R}^+ \rightarrow \mathbb{R}  ; \|u e^{bx}\|_{L^2(\mathbb{R}^+)} < \infty \}.
 \end{equation}
To our knowledge, both problems have not been addressed in the literature yet and the existing results do not give them an immediate answer. As it is known in the theory of dispersive wave equations, global well-posedness in any particular space seems to depend on the available local theory, on the energy- type inequalities satisfied by the solutions and also on the linear theory. Therefore, we first show that the corresponding linear problem generates a semigroup of continuous operator in $[L^2_b(\mathbb{R}^+)]^2$. In particular, we establish the so-called Kato smoothing effect, i. e.,  the solutions whose initial datum lies in $[L^2_b(\mathbb{R}^+)]^2$ not only lies in $\mathcal{C}([0,T], [L^2_b(\mathbb{R}^+)]^2)$, but also in $L^2(0,T;[H^1_b(\mathbb{R}^+)]^2)$. This property made it possible to combine Duhamel formula and a contraction-mapping argument to prove directly the local well-posedness result. In order to get the global result we derive some a priori global estimate, whence we also deduce the Kato smoothing effect for the nonlinear problem. Those a priori estimates were not available. In fact, the only available a priori estimates for the initial-boundary value problem (the energy estimate and the kato effect) are not sufficient to yield a global well-posedness result.

With the well-posedness results in hand, we can investigate the long time
behavior of solutions. More precisely, we aim to prove that the localized damping mechanism $a(x)(u,v)$ is strong enough in order to guarantee the exponential decay of solutions in $[L^2_b(\mathbb{R}^+)]^2$, as
long as the initial data in \eqref{e1} live inside a ball of radius R (no matter how large is
$R > 0$). This is done introducing a convenient Lyapunov function $\mathcal{L}=\mathcal{L}(U(t))$ function, which is a suitable perturbation of the $[L^2_b(\mathbb{R}^+)]^2$-norm of the solution, that satisfies
$$\mathcal{L}(U(t)) - \mathcal{L}(U_0) \leq c\,\mathcal{L}(U_0),\mbox{ for some } c>0.$$
Indeed, this inequality gives at once the decay
$$\mathcal{L}(U(t)) \leq C \mathcal{L}(U_0) e^{-\nu t},$$
where $C$ and $\nu$ are positive constant with $\nu=\nu(E(0))$. The proof combines energy estimates and the result on the exponential decay of $E(t)$ obtained in \cite{pazoto-souza}. It is also worth mentioning the work \cite{pazoto-rosier} that develops the same analysis for the corresponding scalar KdV equation and from which we borrow the ideas involved in our proofs. However, since we are concerned with a coupled system of nonlinear equations we need more delicate estimates. Moreover, we improve the analysis developed in \cite{pazoto-rosier} showing that the inequality above holds without any assumption on $b$.

Our result on the time decay rate is of local nature in the sense that the exponential decay rate is uniform
only in bounded sets of initial data. However, the results obtained in this
paper do not provide any estimate on how the decay rate depends on the radius $R$ of
the ball. This has been done for nonlinear models, as far as we know, in very few cases
and always using some structural conditions on the nonlinearity.

We finish this section mentioning some results obtained for the scalar KdV equation in connection with the analysis developed here \cite{cavalcanti,laurent,linares-pazoto,linares-pazoto1,massarolo-menzala-pazoto1,pazoto,rosier1,rosier2,rosier3,rosier-zhang}. In what concern system \eqref{e1}, except in \cite{pazoto-souza}, the problem on internal stabilization was addressed only on a bounded domain \cite{vep,massarolo-menzala-pazoto,dugan}. In fact, most of results present in these works are proved following the ideas introduced for the analysis of the corresponding scalar case.

The analysis described above was organized in two sections. In Section 2 we present the global well-posedness result in two parts, the first one is devoted to the linear theory. Section 3 is devoted the stabilization problem. In both sections we split the results into several steps in order to make the reading easier.

\section{Well-posedness}

\setcounter{equation}{0}

The key of the modern analysis of nonlinear, dispersive evolution equations is the linear estimates. Therefore, first we address the well-posedness result for linear problem associated to the initial-boundary value problem \eqref{e1}.

\subsection{The linear system} Fix $b>0$. Considerations is first directed to the corresponding
linear system
\begin{equation}\label{sis lin}
\left\{
\begin{array}{l}
u_{t}+u_{xxx}+a_{3}v_{xxx}+a(x)u=0\\
b_{1}v_{t}+v_{xxx}+b_{2}a_{3}u_{xxx}+rv_{x}+a(x)v=0,\;x,t\in \RR^+,\\
u(0,t)=v(0,t)=0,\; t>0,\\
u(x,0)=u_{0}(x), v(x,0)=v_{0}(x),\;x>0.
\end{array}
\right.
\end{equation}\\
In order to apply the semigroup theory, we introduce the operator
$$A: D(A)\subset \left[L^2_b(\RR^+)\right]^2\rightarrow \left[L^2_b(\RR^+)\right]^2$$
defined by
$$
A\left(
\begin{array}[c]{l}
u\\
v
\end{array}
\right)=\left(
\begin{array}{l}
-u_{xxx}-a_3v_{xxx}\\
-\dfrac{1}{b_1}v_{xxx}-\dfrac{a_3b_2}{b_1}u_{xxx}-\dfrac{r}{b_1}v_x
\end{array}
\right)
$$
with the domain
$$D(A):=\left\{U= \left(
\begin{array}[c]{l}
u\\
v
\end{array}
\right) \in\left[L^2_b(\RR^+)\right]^2\; : \partial^i_xU\in\left[L^2_b(\RR^+)\right]^2\mbox{ for } 1\leq i \leq 3\mbox{ and } U(0)=0\right\}.$$

With the notation introduced above, the following holds:

\begin{lemma}\label{exist linear}
The operator $A$ defined above is the infinitesimal generator of a continuous semigroup of operators $(S(t))_{t\geq0}$ in $\left[L^2_b(\RR^+)\right]^2$.
\end{lemma}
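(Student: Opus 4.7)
The plan is to apply the Lumer--Phillips theorem to the shifted operator $A - \lambda^* I$ for $\lambda^*>0$ sufficiently large, with the space $[L^2_b(\RR^+)]^2$ endowed with the inner product modeled on the energy \eqref{energy},
$$
\langle (u_1,v_1),(u_2,v_2)\rangle_b := \int_{\RR^+}\bigl(b_2 u_1u_2 + b_1 v_1 v_2\bigr)e^{2bx}\,dx,
$$
which is equivalent to the standard weighted inner product. Density of $D(A)$ in $[L^2_b(\RR^+)]^2$ is clear since $D(A)$ contains $[C^\infty_c(0,\infty)]^2$. It remains to verify dissipativity of $A-\lambda^* I$ and surjectivity of $\lambda^* I - A$ for some $\lambda^*>0$; the Lumer--Phillips theorem will then yield a $C_0$-semigroup of quasi-contractions with $\|S(t)\|\leq e^{\lambda^* t}$.

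For dissipativity, I would compute $\langle AU,U\rangle_b$ by integrating by parts the third-order terms term by term. The boundary contributions at $x=0$ combine (using $u(0)=v(0)=0$) exactly as in \eqref{dissipation} into
$$
-\tfrac{b_2}{2} u_x(0)^2 - a_3 b_2 u_x(0)v_x(0) - \tfrac{1}{2} v_x(0)^2
= -\tfrac{1}{2}\bigl(\sqrt{b_2}\,u_x(0) + \sqrt{a_3^2 b_2}\,v_x(0)\bigr)^2 - \tfrac{1}{2}(1-a_3^2 b_2)v_x(0)^2 \leq 0,
$$
thanks to the hypothesis $a_3^2 b_2 < 1$. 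Differentiation of the weight $e^{2bx}$ inside the interior integrals produces extra contributions: the good second-order terms of the form $-3b\,b_2\|u_x\|_b^2 - 3b\|v_x\|_b^2$ (the mixed $a_3 u_x v_x$ weighted term is absorbed through Young's inequality together with $a_3^2 b_2 < 1$), plus lower-order contributions bounded by $C(b,b_1,b_2,a_3,r)\|U\|_b^2$. Finally, $-\int a(x)(b_2 u^2 + v^2)e^{2bx}\,dx\leq 0$. Taking $\lambda^* \geq C$ therefore yields $\langle (A-\lambda^* I)U,U\rangle_b \leq 0$ on $D(A)$.

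For the range condition, given $F=(f,g)\in [L^2_b(\RR^+)]^2$ I need to solve $(\lambda^* I - A)U = F$ with $U\in D(A)$. Applying the isometric change of unknown $\tilde U := e^{b\cdot}U$, $\tilde F := e^{b\cdot}F$, this becomes a linear third-order ODE system on $\RR^+$ in the unweighted space $[L^2(\RR^+)]^2$, of the same principal form as $A$ with bounded lower-order perturbations coming from the conjugation, subject to $\tilde U(0)=0$. Because $a_3^2 b_2 < 1$ the $2\times 2$ matrix coupling $u_{xxx}$ to $v_{xxx}$ is invertible, so the leading symbol is nondegenerate; the characteristic equation at infinity has the right number of stable roots to fit the two homogeneous Dirichlet conditions at $x=0$, and a standard ODE construction (decoupling the principal part and variation of parameters) produces a unique $\tilde U\in [H^3(\RR^+)]^2$. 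Energy estimates obtained by pairing the ODE with $\tilde U$ and its derivatives give $\|\tilde U\|_{H^3}\lesssim \|\tilde F\|_{L^2}$, which translates via the isometry into $U\in D(A)$ with $\|U\|_{D(A)}\lesssim \|F\|_b$, proving surjectivity.

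The main obstacle lies in handling simultaneously the cross third derivatives ($a_3 v_{xxx}$ in the first equation and its mirror in the second) and the $b$-dependent perturbations induced by the weight $e^{2bx}$: the boundary form at $x=0$ must remain nonpositive, the weighted interior terms must not overwhelm the natural dispersive dissipation, and the leading symbol must stay invertible so that the resolvent ODE is solvable. All three obstructions are controlled by the same smallness hypothesis $0 < a_3^2 b_2 \ll 1$, after which Lumer--Phillips yields the desired generation result.
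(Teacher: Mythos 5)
Your dissipativity computation is essentially the paper's: the authors conjugate by $e^{bx}$ and work with the operator $B$ on $[L^2(\RR^+)]^2$ equipped with the inner product $\frac{b_2}{b_1}\int p\phi+\int q\psi$, which is the same as your weighted inner product up to a constant factor, and they obtain exactly the boundary sum of squares you describe together with the interior terms $-3b(1-a_3^2b_2)\int q_x^2$ and a bound $C\|W\|^2$ absorbed by the shift $\lambda$. Where you diverge is the second half of Lumer--Phillips. The paper never solves the resolvent equation: it observes that $B$ is closed and densely defined, computes the adjoint $B^*$ explicitly (whose domain carries the extra condition $\partial_x W(0)=0$), and shows that $B^*-\lambda$ is dissipative by the same integration by parts; the generation theorem for closed densely defined operators with dissipative adjoint then gives the range condition for free. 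That route avoids all ODE analysis and, in particular, is insensitive to the fact that $a(x)$ is merely an $L^\infty$ coefficient.

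Your direct attack on $(\lambda^* I-A)U=F$ has two genuine gaps as written. First, the resolvent equation is \emph{not} a constant-coefficient ODE system: the damping $a(x)\in L^\infty(\RR^+)$ is a variable (and possibly discontinuous) coefficient, so "decoupling the principal part and variation of parameters" and root-counting of the characteristic polynomial at infinity do not directly apply; you would need either to treat $a(x)\cdot$ as a bounded perturbation of the generator obtained for $a\equiv 0$ (acceptable, but it must be said), or to abandon the explicit construction. Second, even in the constant-coefficient case, matching the two-dimensional stable subspace at $+\infty$ against the two conditions $u(0)=v(0)=0$ requires showing that the resulting $2\times 2$ matching matrix is nonsingular; counting dimensions is not enough, and you give no argument (injectivity of $\lambda^* I-A$, which you do get from dissipativity, yields uniqueness of solutions in $D(A)$ but does not by itself rule out that the particular solution cannot be corrected to satisfy the boundary conditions). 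Both gaps are repairable --- the cleanest repair being precisely the paper's adjoint-dissipativity argument, or a bounded-perturbation plus Fredholm-alternative argument --- but as it stands the surjectivity step is not a proof.
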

\begin{proof}
We first make the change of variables $p=e^{bx}u$ and $q=e^{bx}v$. Then, if $u$ and $v$ solves \eqref{sis lin}, $p$ and $q$ solve
\begin{equation}\label{sis lin 2}
\left\{
\begin{array}{l}
p_{t}+(\partial_x-b)^3p+a_3(\partial_x-b)^3q+a(x)p=0\\
b_{1}q_{t}-rbq+rq_x+(\partial_x-b)^3q+a_3b_2(\partial_x-b)^3p+a(x)q=0 \ \ \ x,t\in \RR^+,\\
p(0,t)=q(0,t)=0,\ \ t>0,\\
p(x,0)=p_0(x)=e^{bx}u_{0}(x);\ \ q(x,0)=q_0(x)=e^{bx}v_{0}(x),\; x>0.
\end{array}
\right.
\end{equation}\\
Now we introduce the Hilbert space
$$X=\left[L^2(\RR^+)\right]^2$$
endowed with the inner product
$$ \left(\left(\begin{array}[c]{l}
p\\
q
\end{array}\right),\left(\begin{array}[c]{l}
\phi\\
\psi
\end{array}\right)\right)_X=\frac{b_2}{b_1}\int_{\mathbb{R}^+} p\phi dx+\int_{\mathbb{R}^+} q\psi dx.$$
Next, we consider the operator
$$B: D(B)\subset X\rightarrow X$$
with the domain
$$
D(B)=\left\{W=\left(
\begin{array}[c]{l}
p\\
q
\end{array}
\right)\in[H^3(\RR^+)]^2:W(0)=0 \right\}$$
defined by
\begin{equation}
BW=B\left(
\begin{array}{l}
p\\
q
\end{array}
\right)\\
=\left(
\begin{array}{l}
-(\partial_x-b)^3p-a_3(\partial_x-b)^3q-a(x)p\\
-(\partial_x-b)\dfrac{r}{b_1}q-\dfrac{a_3b_2}{b_1}(\partial_x-b)^3p-(\partial_x-b)^3\dfrac{1}{b_1}q-\dfrac{a(x)}{b_1}q
\end{array}
\right).
\end{equation}
Under the above conditions, the adjoint of the operator $B$ is the operator
$B^*$ given by
\begin{equation}
B^*\left(
\begin{array}{l}
p\\
q
\end{array}
\right)\\
=\left(
\begin{array}{l}
(\partial_x+b)^3p+a_3(\partial_x+b)^3q-a(x)p\\
(\partial_x +b)\dfrac{r}{b_1}q+\dfrac{a_3b_2}{b_1}(\partial_x +b)^3p+(\partial_x +b)^3\dfrac{1}{b_1}q-\dfrac{a(x)}{b_1}q
\end{array}
\right),
\end{equation}
where
$$B^*:D(B^*)\subset X\rightarrow X$$
and
$$D(B^*)=\left\{W=\left(
\begin{array}[c]{l}
p\\
q
\end{array}
\right)\in [H^3(\RR^+)]^2:W(0)=\partial_xW(0)=0\right\}.$$
Since $B$ is a closed operator $(B^{**}=B)$ and the domains $D(B)$ and $D(B^*)$ are dense in $X$, the result is proved if the operators $B-\lambda I$ and its adjoint $B^*-\lambda I$ are both dissipative for some  $\lambda\in \RR^+$. Indeed, first pick $W=\left(
\begin{array}{l}
p\\
q
\end{array}
\right) \in D(B)$ to obtain
\begin{equation} \label{bww}
\begin{array}{l}
\vspace{2mm}\displaystyle \left(BW,W\right)_X\\=
\vspace{2mm}\displaystyle\frac{1}{b_1}\left[-b_2\int_{\RR^+}p_{xxx}pdx+3bb_2\int_{\RR^+}p_{xx}pdx-3b^2b_2\int_{\RR^+}p_{x}pdx+b^3b_2\int_{\RR^+}p^2dx\right.\\
\vspace{2mm}\displaystyle\left.-a_3b_2\int_{\RR^+}q_{xxx}pdx+3ba_3b_2\int_{\RR^+}q_{xx}pdx-3b^2a_3b_2\int_{\RR^+}q_xpdx+b^3a_3b_2\int_{\RR^+}pqdx \right. \\
\vspace{2mm}\displaystyle\left.-\int_{\mathbb{R}^+} a(x)p^2dx\right]-\frac{r}{b_1}\int_{\RR^+}pq_xdx+\frac{br}{b_1}\int_{\RR^+}q^2dx-\frac{1}{b_1}\int_{\mathbb{R}^+} a(x)q^2dx\\
\vspace{2mm}\displaystyle-\frac{a_3b_2}{b_1}\int_{\RR^+}(p_{xxx}-3bp_{xx}+3b^2p_x-b^3p)qdx-\frac{1}{b_1}\int_{\RR^+}(q_{xxx}-3bq_{xx}+3b^2q_x-b^3q)qdx.\\
\end{array}
\end{equation}
We observe that
\begin{equation}
\begin{array}{l}
\vspace{1mm}\displaystyle-b_2\int_{\RR^+}p_{xxx}pdx = -\frac{b_2}{2}p_x^2(0,t) \\
\vspace{1mm}\displaystyle3bb_2\int_{\RR^+}p_{xx}pdx = -3bb_2\int_{\RR^+}p_x^2dx \\
\vspace{1mm}\displaystyle-3b^2b_2\int_{\RR^+}p_{x}pdx = 0\\
\vspace{1mm}\displaystyle3ba_3b_2\int_{\RR^+}q_{xx}pdx = -3ba_3b_2\int_{\RR^+}q_xp_xdx\\
\vspace{1mm}\displaystyle-3b^2a_3b_2\int_{\RR^+}q_xpdx = 3b^2a_3b_2\int_{\RR^+}qp_xdx\\
\vspace{1mm}\displaystyle-a_3b_2\int_{\RR^+}p_{xxx}qdx = a_3b_2\int_{\RR^+}q_{xxx}pdx+a_3b_2p_x(0,t)q_x(0,t)\\
\vspace{1mm}\displaystyle3ba_3b_2\int_{\RR^+}p_{xx}qdx = -3ba_3b_2\int_{\RR^+}p_xq_xdx.\nonumber
\end{array}
\end{equation}
Similar computations can be done for the remaining terms. Combining \eqref{bww} and the above estimates, the following holds
\begin{equation}\label{bww2}
\begin{array}[c]{l}
\vspace{2mm}\displaystyle \left(BW,W\right)_X\\ \leq
\vspace{2mm}\displaystyle\frac{1}{b_1}\left[-3b\int_{\RR^+}\left(b_2p_x^2+q_x^2\right)dx-6ba_3b_2\int_{\RR^+}p_xq_xdx+(b^3+br)\int_{\RR^+}\left(b_2p^2+q^2\right)dx \right.\\
\vspace{2mm}\displaystyle\left.+\left|2b^3a_3b_2\right|\int_{\RR^+}pqdx-r\int_{\RR^+}pq_xdx-\frac{1}{2}\left(b_2p_x^2(0,t)+q_x^2(0,t)\right)+a_3b_2q_x(0,t)p_x(0,t)\right].\\
\end{array}
\end{equation}
In order to conclude this step of the proof we estimate terms on the right hand side of \eqref{bww2} using H\"{o}lder inequality:
\begin{equation}
\begin{array}{l}
\vspace{2mm}\left|6ba_3b_2\displaystyle\int_{\mathbb{R}^+} p_xq_xdx\right|\ \leq\ 3b\left(\displaystyle\int_{\mathbb{R}^+} b_2p_x^2dx+a_3^2b_2\int_{\mathbb{R}^+} q_x^2dx\right)\\
\vspace{2mm}\displaystyle\left|2b^3a_3b_2\displaystyle\int_{\mathbb{R}^+} pqdx\right|\ \leq\ (b^3\left|a_3\right|b_1)\frac{b_2}{b_1}\displaystyle\int_{\mathbb{R}^+} p^2dx+(b^3\left|a_3\right|b_2)\displaystyle\int_{\mathbb{R}^+} q^2dx\\
\vspace{2mm}\qquad\qquad\qquad\qquad\,\,\displaystyle\ \leq\ [(b_1+b_2)\left|a_3\right|b^3]\left(\frac{b_2}{b_1}\int_{\mathbb{R}^+} p^2dx+\int_{\mathbb{R}^+} q^2dx\right)\\
\vspace{2mm}\displaystyle \left|-r\int_{\mathbb{R}^+} pq_xdx\right|\ \leq\ \frac{r^2}{2}\int_{\mathbb{R}^+} q_x^2dx+\left(\frac{b_1}{2b_2}\right)\frac{b_2}{b_1}\int_{\mathbb{R}^+} p^2dx\\
\displaystyle a_3\sqrt{b_2}q_x(0,t)\sqrt{b_2}p_x(0,t)\ \leq\ \frac{a_3^2b_2}{2}q_x^2(0,t)+\frac{b_2}{2}p_x^2(0,t).\nonumber
\end{array}
\end{equation}
Recalling that $a^2_3b_2<1$ and $r$ is sufficiently small (in particular, $\frac{r^2}{2}-3b(1-a_3^2b_2)<0$), from the above estimates and \eqref{bww2}, we get
\begin{equation}\label{b2}
\begin{array}{l}
\vspace{2mm}\displaystyle \left(BW,W\right)_X\\
\vspace{2mm}\displaystyle\leq \frac{1}{b_1}\left\{\frac{r^2}{2}-3b\left(1-a_3^2b_2\right)\int_{\RR^+}q_x^2dx+\left[b_1\left(b^3+br\right)+b^3\left|a_3\right|(b_1+b_2)+\frac{b_1}{2b_2}\right]\frac{b_2}{b_1}\int_{\mathbb{R}^+} p^2dx\right.\\
\vspace{2mm}\displaystyle\left.+\left[b^3+br+b^3\left|a_3\right|(b_1+b_2)\right]\int_{\mathbb{R}^+} q^2dx-\frac{1}{2}\left(1-a_3^2b_2\right)q_x^2(0,t)\right\}\\
\vspace{2mm}\displaystyle \leq \frac{1}{b_1}\left[\left(b_1+1\right)\left(b^3+br\right)+b^3\left|a_3\right|(b_1+b_2)+\frac{b_1}{2b_2}\right]	 \left(\frac{b_2}{b_1}\int_{\mathbb{R}^+} p^2dx+\int_{\mathbb{R}^+} q^2dx\right).
\end{array}
\end{equation}
Then, choosing
 $$\lambda=\frac{1}{b_1}\left[\left(b_1+1\right)\left(b^3+br\right)+b^3\left|a_3\right|(b_1+b_2)+\frac{b_1}{2b_2}\right]>0$$
we deduce that
$$\left((B-\lambda)W,W\right)_X\leq0.$$
Hence, $B-\lambda$ is a dissipative in $X$. Now, picking $W\in D(B^*)$ and performing as in the previous computation, we can deduce that
$$
\displaystyle\left(W,B^*W\right)_X= \left(BW,W\right)_X \leq \lambda\left(W,W\right)_X,
$$
which allows us to conclude that $B^*-\lambda$ is dissipative. The proof is now complete.
\end{proof}

The following existence and uniqueness result for the solution of the system \eqref{sis lin} is a
direct consequence of Lemma \ref{exist linear} and the semigroups theory.
\begin{theorem}\label{t1}
Let $U_0 \in [L^2_b(\RR^+)]^2$. Then, there exists a unique weak solution $S(t)U_0$ of \eqref{sis lin} such that $U\in \mathcal{C}(\RR^+;[L^2_b(\RR^+)]^2)$. If $U_0\in D(A)$, system \eqref{sis lin} has a unique (classical) solution $U\in \mathcal{C}(\RR^+;D(A))\cap \mathcal{C}^1(\RR^+;[L^2_b(\RR^+)]^2)$.
\end{theorem}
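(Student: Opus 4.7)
The statement is essentially a packaging of Lemma \ref{exist linear} into PDE language, so the plan is to unwind the semigroup assertion rather than to re-do any hard estimate. The hypothesis of the theorem is precisely that $U_0$ belongs to the Hilbert space $H:=[L^2_b(\RR^+)]^2$ on which, by Lemma \ref{exist linear}, $A$ is the infinitesimal generator of a $C_0$-semigroup $(S(t))_{t\geq 0}$. I therefore \emph{define} the weak solution by $U(t):=S(t)U_0$; the strong continuity of the semigroup immediately gives $U\in\mathcal{C}(\RR^+;H)$, and uniqueness in this class follows from the fact that any two mild solutions of $U_t=AU$ with the same initial datum must coincide (a standard consequence of the Duhamel representation for a strongly continuous semigroup).

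For the regular case $U_0\in D(A)$, I will invoke the standard semigroup regularity result: when the initial data lies in the domain of the generator, the orbit $t\mapsto S(t)U_0$ takes values in $D(A)$, is continuously differentiable into $H$, and satisfies $\frac{d}{dt}S(t)U_0=AS(t)U_0=S(t)AU_0$. In particular $S(\cdot)U_0\in\mathcal{C}(\RR^+;D(A))\cap \mathcal{C}^1(\RR^+;H)$, and applying the differential equation component-wise recovers the original system \eqref{sis lin} in the pointwise sense, with the homogeneous boundary conditions built into the definition of $D(A)$.

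The only mild technical point to verify is that the solution constructed at the level of the conjugated unknowns $(p,q)=(e^{bx}u,e^{bx}v)$ in the proof of Lemma \ref{exist linear} really translates into a solution of the original system. This is handled by observing that the multiplication operator $M_b\colon (u,v)\mapsto (e^{bx}u,e^{bx}v)$ is an isometric isomorphism between $[L^2_b(\RR^+)]^2$ and the space $X=[L^2(\RR^+)]^2$ used in the lemma, and that $A=M_b^{-1}BM_b$ with compatible domains. Consequently the semigroup $e^{tA}=M_b^{-1}e^{tB}M_b$, and the regularity statements for $(p,q)$ transfer term-by-term to $(u,v)$.

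I do not expect any serious obstacle here: all the real work — the dissipativity of $B-\lambda I$ and of $B^*-\lambda I$, the density of the domains, and the closedness of $B$ — has been carried out in Lemma \ref{exist linear}. The proof of Theorem \ref{t1} should therefore occupy only a few lines, consisting of the identification of $U(t)=S(t)U_0$, the transfer via $M_b$, and a citation of the standard existence/regularity dichotomy for $C_0$-semigroups.
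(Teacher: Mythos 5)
Your proposal is correct and matches the paper's intent exactly: the paper gives no separate proof of Theorem \ref{t1}, stating only that it is ``a direct consequence of Lemma \ref{exist linear} and the semigroups theory,'' which is precisely the standard $C_0$-semigroup existence/regularity dichotomy you invoke, together with the conjugation $A=M_b^{-1}BM_b$ already implicit in the proof of that lemma. No gaps.
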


An additional regularity result for the weak solutions of \eqref{sis lin} is proved in the
next Lemma. It is essentially the same result proved in Lemma \ref{lema 1}.

\begin{lemma}\label{lema 0}
Let $U_0 \in \left[L^2_b(\RR^+)\right]^2$ and $U$ the weak solution of \eqref{sis lin}. Then, for any  $T>0$, $U\in L^2(0,T;[H^1(\RR^+)]^2)$
and
\begin{equation}\label{12'}
\left\|U\right\|_{L^\infty(0,T;[L^2_b]^2)} + \left\|U_x\right\|_{L^2(0,T;[L^2_b]^2)}\leq C\left\|U_0\right\|_{[L^2_b]^2} ,
\end{equation}
where $C=C(T)$ is a positive constant.
\end{lemma}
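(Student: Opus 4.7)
The plan is to invoke the change of variables $p = e^{bx}u$, $q = e^{bx}v$ already introduced in the proof of Lemma~\ref{exist linear}. Under it, \eqref{sis lin} becomes \eqref{sis lin 2} and the weighted $L^2_b$ norms are identified with the standard norms in $X=[L^2(\RR^+)]^2$. Writing $W=(p,q)$, the estimate \eqref{12'} is therefore equivalent to the Kato-type smoothing bound
\begin{equation*}
\|W\|_{L^\infty(0,T;X)}^2 + \int_0^T \|W_x(t)\|_X^2\,dt \;\leq\; C(T)\,\|W_0\|_X^2
\end{equation*}
for $W(t)=e^{tB}W_0$. The additional inclusion $U\in L^2(0,T;[H^1(\RR^+)]^2)$ follows from the trivial embedding $L^2_b(\RR^+)\hookrightarrow L^2(\RR^+)$.

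For $U_0\in D(A)$ (equivalently $W_0\in D(B)$), $W$ is a classical solution so every integration by parts is justified and $\tfrac{d}{dt}\|W\|_X^2 = 2(BW,W)_X$. I would then revisit the computation carried out for Lemma~\ref{exist linear}: the right-hand side of \eqref{bww2} contains the definite-sign term $-3b\int_{\RR^+}(b_2 p_x^2+q_x^2)\,dx$ together with the indefinite cross-term $-6ba_3b_2\int_{\RR^+}p_x q_x\,dx$. Young's inequality $|p_xq_x|\leq \tfrac{1}{2}(\gamma p_x^2 + \gamma^{-1}q_x^2)$ with a parameter $\gamma$ chosen in the interval $(|a_3|b_2,\,1/|a_3|)$ — which is non-empty precisely because $a_3^2b_2<1$ — absorbs the cross term while leaving strictly negative coefficients on both $\int p_x^2$ and $\int q_x^2$. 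Estimating the remaining zero-order contributions by $\lambda\|W\|_X^2$ exactly as in \eqref{b2}, we arrive at
\begin{equation*}
\frac{d}{dt}\|W(t)\|_X^2 + 2c_0\int_{\RR^+}(p_x^2+q_x^2)\,dx \;\leq\; 2\lambda\,\|W(t)\|_X^2
\end{equation*}
for constants $c_0,\lambda>0$ independent of $W_0$.

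From here the rest is routine. Gronwall's inequality yields $\|W(t)\|_X^2\leq e^{2\lambda t}\|W_0\|_X^2$, giving the $L^\infty_tL^2_x$ part. Integrating on $[0,T]$ and inserting this bound on the right produces $\int_0^T\|W_x(t)\|_X^2\,dt \leq C(T)\|W_0\|_X^2$. Translating back via $p_x = e^{bx}(u_x+bu)$ and the analogue for $q$ converts these into the weighted bounds on $U$ and $U_x$ in \eqref{12'}. A standard density argument then extends the estimates from $U_0\in D(A)$ to arbitrary $U_0\in[L^2_b(\RR^+)]^2$, using the strong continuity of the semigroup given by Lemma~\ref{exist linear}. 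The only non-routine ingredient is the choice of $\gamma$, which is the place where the structural hypothesis $a_3^2b_2<1$ is used in its sharp form; everything else is a refinement of the dissipativity computation already carried out.
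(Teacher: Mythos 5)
Your proposal is correct, and the underlying mechanism is the same as the paper's: a weighted $L^2$ energy estimate that exploits the smoothing term $-3b\int(b_2(\cdot)_x^2+(\cdot)_x^2)$, absorbs the cross term using $a_3^2b_2<1$, and closes with Gronwall. The execution differs in two respects worth recording. First, the paper (which for Lemma~\ref{lema 0} simply defers to the proof of Lemma~\ref{lema 1}) works in the original variables $(u,v)$ with the multiplier $(e^{2bx}-1)$ and the space--time integrated identities \eqref{10'}--\eqref{11'}, whereas you work in the transformed variables $(p,q)$ and differentiate $\|W\|_X^2$ using the dissipativity computation of Lemma~\ref{exist linear}; these are the same integrals in different coordinates, so nothing is gained or lost there. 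Second --- and this is a genuine improvement --- your Young split of the cross term with $\gamma\in(|a_3|b_2,\,1/|a_3|)$ leaves strictly negative coefficients on \emph{both} $\int p_x^2$ and $\int q_x^2$ under the sharp hypothesis $a_3^2b_2<1$, whereas the paper's fixed split \eqref{est 1} leaves the coefficient $3b(1-2a_3^2b_2)$ on the $v_x^2$ term and therefore effectively needs $a_3^2b_2<1/2$ (harmless under the standing assumption $a_3^2b_2\ll1$, but not sharp). One small inaccuracy in your write-up: the term $-\frac{r}{b_1}\int pq_x$ is not a zero-order contribution; it must be split as $\varepsilon\int q_x^2+C_\varepsilon\int p^2$ and absorbed using the smallness of $r$, exactly as the paper does via the condition $\frac{r^2}{2}-3b(1-a_3^2b_2)<0$ --- this is routine but should be said, since it slightly erodes the margin your choice of $\gamma$ creates. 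The density argument and the back-translation $u_xe^{bx}=p_x-bp$ are fine.
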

\begin{proof}
The proof is obtained following the arguments developed in the proof of Lemma \ref{lema 1}, therefore we omit the details.
\end{proof}

\subsection{The nonlinear system}

In order to make the reading easier we split the proof of the main result (Theorem
\ref{nl}) into several steps. The first result is taken from \cite{pazoto-rosier}. For the sake of completeness
we repeat it here.

\begin{lemma}\label{desig b}
If $U\in [H^1_b(\RR^+)]^2$, then
\begin{equation}
 \left\|U(x)e^{bx}\right\|_{[L^\infty(\RR^+)]^2}\leq \sqrt{2+2b}\left\|U\right\|^{\frac{1}{2}}_{[L^2_b(\RR^+)]^2}\left\|U\right\|^{\frac{1}{2}}_{[H^1_b(\RR^+)]^2}.
\end{equation}
\end{lemma}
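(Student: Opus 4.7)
The idea is to reduce to the scalar statement and apply the fundamental theorem of calculus to $w^2 e^{2bx}$, together with Cauchy--Schwarz. Work with each component $w\in H^1_b(\RR^+)$ of $U$ separately; the vector inequality will follow since each component satisfies $\|w\|_{L^2_b}\leq\|U\|_{L^2_b}$ and $\|w\|_{H^1_b}\leq\|U\|_{H^1_b}$, and the $[L^\infty]^2$-norm is controlled (up to a harmless constant absorbed in the $\sqrt{2+2b}$) by the maximum of the two scalar $L^\infty$-norms.

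For the scalar inequality, fix $x\geq 0$. Since $w\in H^1_b(\RR^+)$, the map $y\mapsto w^2(y)e^{2by}$ lies in $W^{1,1}(\RR^+)$ (its derivative is $2ww_ye^{2by}+2bw^2e^{2by}$, which is integrable by Cauchy--Schwarz), hence admits a limit at $+\infty$, and that limit must be $0$ because the function itself is integrable. The plan is then to write
\begin{equation*}
w^2(x)e^{2bx}=-\int_x^{\infty}\frac{d}{dy}\bigl(w^2(y)e^{2by}\bigr)\,dy=-2\int_x^{\infty}w\,w_y\,e^{2by}\,dy-2b\int_x^{\infty}w^2\,e^{2by}\,dy,
\end{equation*}
extend both integrals to all of $\RR^+$, and estimate them by Cauchy--Schwarz to obtain
\begin{equation*}
w^2(x)e^{2bx}\leq 2\|w\|_{L^2_b}\|w_x\|_{L^2_b}+2b\|w\|_{L^2_b}^2.
\end{equation*}

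Next, I would use $\|w_x\|_{L^2_b}\leq\|w\|_{H^1_b}$ and $\|w\|_{L^2_b}\leq\|w\|_{H^1_b}$ to bound both terms uniformly by $(2+2b)\|w\|_{L^2_b}\|w\|_{H^1_b}$, so that after taking the supremum in $x$ and then square roots,
\begin{equation*}
\|w e^{bx}\|_{L^\infty(\RR^+)}\leq\sqrt{2+2b}\,\|w\|_{L^2_b}^{1/2}\|w\|_{H^1_b}^{1/2}.
\end{equation*}
Applying this to $w=u$ and $w=v$ and combining the two componentwise estimates yields the claimed vector inequality.

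No serious obstacle is expected; the only delicate point is justifying that $w^2 e^{2bx}\to 0$ at infinity, which follows from the $W^{1,1}$-argument above. Density of smooth compactly supported functions in $H^1_b(\RR^+)$ could be invoked as an alternative justification if a cleaner derivation is desired, but the integrability argument is already sufficient.
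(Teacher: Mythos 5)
Your proof is correct, and the core mechanism is the same as the paper's (fundamental theorem of calculus applied to $w^2e^{2bx}$, then Cauchy--Schwarz), but you integrate in the opposite direction, and this is worth noting. The paper writes $u^2(\xi)e^{2b\xi}=\int_0^\xi (u^2e^{2bx})_x\,dx$, which silently uses $u(0)=0$; that is harmless in the paper's applications (the lemma is only ever applied to solutions satisfying the Dirichlet condition at $x=0$), but it is not valid for an arbitrary element of $H^1_b(\RR^+)$ as the statement is phrased. Your version, integrating from $x$ to $+\infty$ and justifying the vanishing at infinity through the $W^{1,1}$ observation, proves the lemma as literally stated with no boundary condition, and in fact the term $-2b\int_x^\infty w^2e^{2by}\,dy$ is nonpositive, so you could even drop it and improve the constant. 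The one place where you give something away is the final assembly: the paper adds the two pointwise bounds \emph{before} estimating and then applies Cauchy--Schwarz across components, $\|u\|_{L^2_b}\|u\|_{H^1_b}+\|v\|_{L^2_b}\|v\|_{H^1_b}\leq\|U\|_{[L^2_b]^2}\|U\|_{[H^1_b]^2}$, which yields the constant $\sqrt{2+2b}$ exactly for the Euclidean product norm, whereas your componentwise-then-max combination loses a factor $\sqrt{2}$ under that convention. Since the precise constant plays no role in how the lemma is used (only the existence of some constant matters in \eqref{ipom} and Lemma \ref{KT}), this is a cosmetic rather than a substantive difference.
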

\begin{proof}
Let $\xi\in \RR^+$ and $u, v\in H^1_b(\RR^+)$. From Cauchy-Schwarz inequality, we get
\begin{equation}
\begin{array}{l}
\displaystyle u^2(\xi)e^{2b\xi}=\int_0^\xi \left(u^2e^{2bx}\right)_xdx = \int_0^\xi \left(2uu_xe^{2bx}+2bu^2e^{2bx}\right)dx\\
\vspace{3mm}\displaystyle\leq 2\left(\int_{\mathbb{R}^+} u^2e^{2bx}dx\right)^\frac{1}{2}\left(\int_{\mathbb{R}^+} u_x^2e^{2bx}dx\right)^\frac{1}{2}+2b\int_{\mathbb{R}^+} u^2e^{2bx}dx\\
\vspace{3mm}\displaystyle\leq (2+2b)\left\|u\right\|_{L^2_b(\RR^+)}\left\|u\right\|_{H^1_b(\RR^+)}.\nonumber
\end{array}
\end{equation}
Consequently,
$$
\begin{array}{l}
\displaystyle \left(u^2(\xi)+v^2(\xi)\right)e^{2b\xi}\leq (2+2b)\left(\left\|u\right\|_{L^2_b(\RR^+)}\left\|u\right\|_{H^1_b(\RR^+)}+\left\|v\right\|_{L^2_b(\RR^+)}\left\|v\right\|_{H^1_b(\RR^+)}\right)\\
\vspace{3mm}\qquad\qquad\qquad\qquad\,\,\,\leq(2+2b)\left\|U\right\|_{[L^2_b(\RR^+)]^2}\left\|U\right\|_{[H^1_b(\RR^+)]^2},
\end{array}
$$
and the result follows.
\end{proof}

The main result of this section reads as follows:

\begin{theorem}\label{nl} For any $U_0\in \left[L^2_b(\RR^+)\right]^2$ and $T>0$, there exists a unique solution
$U\in \mathcal{C}([0,T];\left[L^2_b(\RR^+)\right]^2)\cap L^2 (0,T;\left[H^1_b(\RR^+)\right]^2 )$ of \eqref{e1}.
\end{theorem}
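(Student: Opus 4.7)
The plan is to follow the standard dispersive strategy: reformulate \eqref{e1} via Duhamel's formula with the linear semigroup $(S(t))_{t\geq 0}$ of Lemma \ref{exist linear}, solve it by a Banach fixed point in an appropriate weighted Kato-type space, and then extend to arbitrary $T>0$ by deriving an a priori weighted energy estimate. Let $N(U)$ collect the nonlinear terms of \eqref{e1}, namely the components containing $uu_x$, $vv_x$ and $(uv)_x$ with their respective constants. Writing \eqref{e1} as $U_t = AU - N(U)$ and applying Duhamel, a candidate for solution is a fixed point of
\begin{equation*}
\Gamma(U)(t) = S(t)U_0 - \int_0^t S(t-s) N(U(s))\,ds.
\end{equation*}
The natural function space is
$X_T = \mathcal{C}([0,T];[L^2_b(\RR^+)]^2)\cap L^2(0,T;[H^1_b(\RR^+)]^2)$, equipped with the norm suggested by \eqref{12'}, and by Lemma \ref{lema 0} the linear part $S(t)U_0$ already lies in $X_T$ with the right control.

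For the nonlinear term, the key ingredient is Lemma \ref{desig b}. A representative estimate reads
\begin{equation*}
\|uu_x\|_{L^2_b} \leq \|ue^{bx}\|_{L^\infty}\|u_x\|_{L^2_b}\leq \sqrt{2+2b}\,\|u\|^{1/2}_{L^2_b}\|u\|^{1/2}_{H^1_b}\|u_x\|_{L^2_b},
\end{equation*}
and the coupling term $(uv)_x = u_x v + u v_x$ is treated analogously. Integrating in time, applying Cauchy--Schwarz and using Lemma \ref{lema 0} (equivalently, its inhomogeneous version, proved as in that lemma applied to the forced problem), I expect to obtain
\begin{equation*}
\|\Gamma(U)\|_{X_T}\leq C\|U_0\|_{[L^2_b]^2} + CT^{1/4}\|U\|_{X_T}^2,
\end{equation*}
with a matching Lipschitz estimate $\|\Gamma(U)-\Gamma(\tilde U)\|_{X_T}\leq CT^{1/4}(\|U\|_{X_T}+\|\tilde U\|_{X_T})\|U-\tilde U\|_{X_T}$. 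Choosing $T=T(\|U_0\|_{[L^2_b]^2})$ sufficiently small, the Banach fixed point theorem yields a unique local solution in $X_T$.

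To globalize, I would derive an a priori bound on $\|U(t)\|_{[L^2_b]^2}$ on any interval $[0,T]$. Multiplying the first equation of \eqref{e1} by $b_2 u e^{2bx}$ and the second by $v e^{2bx}$, summing and integrating by parts on $\RR^+$ (performing exactly the manipulations used in the proof of Lemma \ref{exist linear}, plus the extra treatment of $uu_x e^{2bx}$, $vv_x e^{2bx}$ and $(uv)_x e^{2bx}$ via integration by parts to generate factors of $b$), the boundary contributions at $x=0$ are nonpositive thanks to $a_3^2 b_2<1$, and one arrives at a differential inequality of the form
\begin{equation*}
\frac{d}{dt}\|U\|_{[L^2_b]^2}^2 + c_1 \|U_x\|_{[L^2_b]^2}^2 \leq c_2\|U\|_{[L^2_b]^2}^2 + c_3 b \int_{\RR^+}(|u|^3+|v|^3)e^{2bx}\,dx.
\end{equation*}
The cubic terms are handled by Lemma \ref{desig b} together with the standard energy bound $\|U(t)\|_{[L^2]^2}\leq \|U_0\|_{[L^2]^2}$ coming from \eqref{dissipation}: since $\|ue^{bx}\|_\infty^2 \leq C\|u\|_{L^2_b}\|u\|_{H^1_b}$, one has $\int u^3 e^{2bx}\leq \|ue^{bx}\|_\infty \|u\|_{L^2}\|ue^{bx}\|_{L^2}$, which by Young's inequality is absorbed by $c_1\|U_x\|_{[L^2_b]^2}^2$ up to a harmless multiple of $\|U\|_{[L^2_b]^2}^2$. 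Gronwall then gives the desired a priori bound, and the local solution is continued to $[0,T]$ for any $T>0$.

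The main obstacle I anticipate is precisely this last step: producing the weighted energy identity with a sign-definite dissipation strong enough to absorb the nonlinear cubic contributions without any smallness assumption on the initial datum. This requires carefully tracking the boundary terms at $x=0$ (which is where the condition $a_3^2 b_2<1$ must be used, as in \eqref{dissipation}) and exploiting the fact that the unweighted energy $E(t)$ is already globally bounded, which is what makes the cubic terms subcritical in the weighted norm.
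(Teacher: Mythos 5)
Your proposal is correct and follows essentially the same route as the paper: a contraction argument for the Duhamel map in $\mathcal{C}([0,T];[L^2_b(\RR^+)]^2)\cap L^2(0,T;[H^1_b(\RR^+)]^2)$ with a $T^{1/4}$ Lipschitz constant, followed by globalization through a weighted energy identity in which the cubic terms are absorbed using Lemma \ref{desig b} together with the global $L^2$ bound coming from \eqref{dissipation}. The only cosmetic difference is that the paper obtains the weighted identity by multiplying by $(e^{2bx}-1)u$, $(e^{2bx}-1)v$ and then adding the plain energy law \eqref{10'}, which is equivalent to your direct use of the multiplier $e^{2bx}$.
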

\begin{proof} First, we consider the nonhomogeneous linear problem
\begin{equation}\label{sis lin 2'}
\left\{
\begin{array}{l}%
b_2u_{t}+b_2u_{xxx}+a_{3}b_2v_{xxx}+a(x)u=f_1\\
b_{1}v_{t}+v_{xxx}+b_{2}a_{3}u_{xxx}+rv_{x}+a(x)v=f_2,\ \ x\in\RR^+, t\in(0,T),    \\
u(0,t)=v(0,t)=0,\ \ t\in(0,T),\\
u(x,0)=u_{0}(x), v(x,0)=v_{0}(x),\ \  x\in\RR^+.
\end{array}
\right.
\end{equation}
For any $f_1, f_2\in L^1\left([0,T];L^2_b(\RR^+)\right)$ and $u_0, v_0 \in L^2_b(\RR^+)$, Lemma \eqref{exist linear}
guarantees that \eqref{sis lin 2'} has a unique weak solution, such that $u, v \in \mathcal{C}([0,T]; L^2_b(\RR^+))$. Moreover, performing as in Lemma \ref{lema 1} we deduce that
\begin{equation} \label{13'}
\begin{array}{l}
\vspace{3mm}\displaystyle \sup_{0\leq t\leq T}\left(\left\|u(t)\right\|_{L^2_b}+\left\|v(t)\right\|_{L^2_b}\right)+\left(\int_0^T\int_{\mathbb{R}^+} \left(u_x^2+v_x^2\right)e^{2bx}dxdt\right)^\frac{1}{2}\\
\vspace{3mm}\displaystyle \leq C\left(\left\|u_0\right\|_{L^2_b}+\left\|v_0\right\|_{L^2_b}+\int_{0}^T\left(\left\|f_1(t)\right\|_{L^2_b}+\left\|f_2(t)\right\|_{L^2_b}\right)dt\right),
\end{array}
\end{equation}
for some positive constant $C=C(T)$ nondecreasing in $T$.

Now we turn our attention to the local well-posedness of \eqref{e1} in the space
$$F=\mathcal{C}([0,T];\left[L^2_b(\RR^+)\right]^2)\cap L^2(0,T;\left[H^1_b(\RR^+)\right]^2)$$
endowed with its natural norm. We introduce the map $\Gamma$ defined by
\begin{equation}
\Gamma(U)(t)=S(t)U_0+\int_0^t S(t-s)N(U)(s)ds
\end{equation}
where $\{S(t)\}_{t\geq 0}$ is the semigroup associated to the linear problem and
\begin{equation}
N(U)=\left(
\begin{array}{c}
\vspace{3mm}\displaystyle -uu_x-a_1vv_x-a_2(uv)_x\\
\vspace{3mm}\displaystyle -\dfrac{1}{b_1}vv_x-\dfrac{a_2b_2}{b_1}uu_x-\dfrac{a_1b_2}{b_1}(uv)_x
\end{array}
\right).
\end{equation}
We shall prove that $\Gamma$ has a fixed point in some ball $B_R(0)$ of the space $F$. Therefore, the following result is needed:
\begin{lemma}\label{KT}
There exists a constant $K>0$, such that
$$\left\|\Gamma(U_2)-\Gamma(U_1)\right\|_F \leq KT^\frac{1}{4}\left( \left\|U_1\right\|_F+\left\|U_2\right\|_F   \right)\left\|U_2-U_1\right\|_F,$$ $\forall\  U_1, U_2\in F$.
\end{lemma}
\begin{proof}
Let $U_1=\left(
\begin{array}[c]{l}
u_1\\
v_1
\end{array}
\right)$, $U_2=\left(
 \begin{array}[c]{l}
 u_{2}\\
 v_{2}
 \end{array}
 \right)\in F$ and $T>0$ to be chosen later. Then, using \eqref{13'} we obtain a positive constant $C>0$, such that
\begin{equation}\label{gama}
\begin{array}[c]{l}
\vspace{1mm} \displaystyle \left\|\Gamma(U_2)-\Gamma(U_1)\right\|_F\\
\vspace{1mm}\displaystyle\leq CL\left(\left\|u_2u_{2x}-u_1u_{1x}\right\|_{L^1(0,T;L^2_b)}+\left\|v_2v_{2x}-v_1v_{1x}\right\|_{L^1(0,T;L^2_b)}
+\left\|u_{2x}v_2-u_{1x}v_1\right\|_{L^1(0,T;L^2_b)}\right.\\
\displaystyle \left.+\left\|u_2v_{2x}-u_1v_{1x}\right\|_{L^1(0,T;L^2_b)}\right),
\end{array}
\end{equation}
where $L=1+\left|a_1\right|+\left|a_2\right|+\dfrac{1}{b_1}+\dfrac{b_2}{b_1}\left(\left|a_1\right|+\left|a_2\right|\right)$. To estimate the terms on the right hand side of the above inequality, we first combine the triangular inequality and H\"{o}lder inequality to obtain
\begin{equation}\label{bor 1}
\begin{array}{l}
\vspace{1mm} \displaystyle \left\|u_2u_{2x}-u_1u_{1x}\right\|_{L^1(0,T;L^2_b)}\  =\left\|\left(u_2-u_1\right)u_{1x}+u_2\left(u_{2x}-u_{1x}\right)\right\|_{L^1(0,T;L^2_b)}\\ \vspace{3mm}\displaystyle\leq\int_{0}^T \left\|u_2-u_1\right\|_{L^\infty}\left\|u_{1x}\right\|_{L^2_b}dt+\int_{0}^T \left\|u_2\right\|_{L^\infty}\left\|u_{2x}-u_{1x}\right\|_{L^2_b}dt\\
\vspace{1mm}\displaystyle=\left\|u_2-u_1\right\|_{L^2(0,T;L^\infty)}\left\|u_{1x}\right\|_{L^2(0,T;L^2_b)}+ \left\|u_{2x}-u_{1x}\right\|_{L^2(0,T;L^2_b)}\left\|u_2\right\|_{L^2(0,T;L^\infty)}.
\end{array}
\end{equation}
Analogously,
\begin{equation}\label{bor 2}
\begin{array}{l}
\vspace{1mm} \displaystyle \left\|u_{2x}v_2-u_{1x}v_1\right\|_{L^1(0,T;L^2_b)}
=\left\|u_{2x}\left(v_2-v_1\right)+v_1\left(u_{2x}-u_{1x}\right)\right\|_{L^1(0,T;L^2_b)}\\
\leq \left\|u_{2x}\right\|_{L^2(0,T;L^2_b)}\left\|v_2-v_1\right\|_{L^2(0,T;L^\infty)}+ \left\|u_{2x}-u_{1x}\right\|_{L^2(0,T;L^2_b)}\left\|v_1\right\|_{L^2(0,T;L^\infty)}.
\end{array}
\end{equation}
Similar conclusions remains valid for the other terms. Now, we recall that, from Gagliardo-Niremberg inequality, we obtain a positive constant $C_{GN}>0$, such that the following inequality holds
\begin{equation}\label{bor 3}
\begin{array}{l}
\vspace{1mm}\displaystyle \left\|w\right\|_{L^2(0,T;L^\infty)}=\left(\int_{0}^T \left\|w\right\|^2_{L^\infty}dt\right)^\frac{1}{2} \leq
\left(\int_{0}^T C^2_{GN}\left\|w\right\|_{L^2}\left\|u_{1x}\right\|_{L^2}dt\right)^\frac{1}{2}\\
\vspace{1mm}\displaystyle\leq C_{GN}\left(\int_{0}^T \left\|w\right\|_{L^2_b}\left\|u_{1x}\right\|_{L^2_b}dt\right)^\frac{1}{2} \leq  C_{GN}\left[ \left(\int_{0}^T \left\|w\right\|^2_{L^2_b}dt\right)^\frac{1}{2}\left(\int_{0}^T\left\|u_{1x}\right\|^2_{L^2_b} dt\right)^\frac{1}{2}\right]^\frac{1}{2}\\
\vspace{1mm}\displaystyle\leq C_{GN}T^\frac{1}{4}\left\|w\right\|_{L^\infty(0,T;L^2_b)}^\frac{1}{2}\left\|u_{1x}\right\|_{L^2(0,T;L^2_b)}^\frac{1}{2}.
\end{array}
\end{equation}
Therefore, returning to \eqref{gama} and taking \eqref{bor 1}-\eqref{bor 3} into account, we get
\begin{equation}\label{contrai}
\begin{array}{l}
\vspace{3mm} \displaystyle \left\|\Gamma(U_2)-\Gamma(U_1)\right\|_F \\
\leq CLC_{GN}T^{\frac{1}{4}}\left[\left(\left\|u_{1x}\right\|_{L^2(0,T;L^2_b)}\left\|u_2-u_1\right\|_{L^\infty(0,T;L_b^2)}^\frac{1}{2}\left\|u_{2x}-u_{1x}\right\|_{L^2(0,T;L^2_b)}^\frac{1}{2}\right)\right.\\
\vspace{3mm}\displaystyle + \left.\left(\left\|u_2\right\|^\frac{1}{2}_{L^\infty(0,T;L_b^2)}\left\|u_{2x}\right\|_{L^2(0,T;L^2_b)}^\frac{1}{2}\left\|u_{2x}-u_{1x}\right\|_{L^2(0,T;L^2_b)}\right)\right.\\
\vspace{3mm}\displaystyle+\left.\left(\left\|v_{1x}\right\|_{L^2(0,T;L^2_b)}\left\|v_2-v_1\right\|^\frac{1}{2}_{L^\infty(0,T;L_b^2)}\left\|v_{2x}-v_{1x}\right\|^\frac{1}{2}_{L^2(0,T;L^2_b)}\right)\right.\\
\vspace{3mm}\displaystyle+\left.\left(\left\|v_2\right\|^\frac{1}{2}_{L^\infty(0,T;L_b^2)}\left\|v_{2x}\right\|_{L^2(0,T;L^2_b)}^\frac{1}{2}\left\|v_{2x}-v_{1x}\right\|_{L^2(0,T;L^2_b)}\right)\right.\\
\vspace{3mm}\displaystyle+  \left(\left\|u_{2x}\right\|_{L^2(0,T;L^2_b)}\left\|v_2-v_1\right\|^\frac{1}{2}_{L^\infty(0,T;L_b^2)}\left\|v_{2x}-v_{1x}\right\|_{L^2(0,T;L^2_b)}^\frac{1}{2}\right) \\
\vspace{3mm}\displaystyle +\left. \left(\left\|v_1\right\|_{L^\infty(0,T;L_b^2)}^\frac{1}{2}\left\|v_1\right\|_{L^2(0,T;L^2_b)}^\frac{1}{2}\left\|u_{2x}-u_{1x}\right\|_{L^2(0,T;L^2_b)} \right) \right.\\
\vspace{3mm}\displaystyle \left. +\left(\left\|v_{2x}\right\|_{L^2(0,T;L^2_b)}\left\|u_2-u_1\right\|_{L^\infty(0,T;L_b^2)}^\frac{1}{2}\left\|u_{2x}-u_{1x}\right\|_{L^2(0,T;L^2_b)}^\frac{1}{2}\right) \right.\\
\vspace{3mm}\displaystyle+\left.\left(\left\|u_1\right\|_{L^\infty(0,T;L_b^2)}^\frac{1}{2}\left\|u_{1x}\right\|_{L^2(0,T;L^2_b)}^\frac{1}{2}\left\|v_{2x}-v_{1x}\right\|_{L^2(0,T;L^2_b)}\right)\right]\\
\vspace{3mm}\displaystyle \leq \dfrac{3CLC_{GN}T^\frac{1}{4}}{2}\left(\left\|u_{1x}\right\|_{L^2(0,T;L^2_b)}+\left\|u_{2x}\right\|_{L^2(0,T;L^2_b)}+\left\|v_{1x}\right\|_{L^2(0,T;L^2_b)}+\left\|v_{2x}\right\|_{L^2(0,T;L^2_b)}\right.\\
\vspace{3mm}\displaystyle +\left.\left\|u_1\right\|_{L^\infty(0,T;L^2)}+\left\|u_2\right\|_{L^\infty(0,T;L^2)}+\left\|v_1\right\|_{L^\infty(0,T;L^2)}+\left\|v_2\right\|_{L^\infty(0,T;L^2)}\right)\times\\
\vspace{3mm}\displaystyle\times \left(\left\|u_{2x}-u_{1x}\right\|_{L^2(0,T;L^2_b)}+\left\|v_{2x}-v_{1x}\right\|_{L^2(0,T;L^2_b)}+\left\|u_2-u_1\right\|_{L^\infty(0,T;L^2)}+\left\|v_2-v_1\right\|_{L^\infty(0,T;L^2)}\right)\\
\vspace{3mm}\displaystyle \leq KT^\frac{1}{4}\left( \left\|U_2\right\|_F + \left\|U_1\right\|_F \right) \left\|U_2-U_1\right\|_F.
\end{array}
\end{equation}
The proof of Lemma \ref{KT} is complete.
\end{proof}
The above discussion allows us to conclude the local well-posedness result. Indeed, Lemma \ref{lema 0} and Lemma \ref{KT} show that $\Gamma U\in F$  and
$$\left\|\Gamma(U)\right\|_F\leq C\left(\left\|U_0\right\|_{[L^2_b]^2}+T^\frac{1}{4}\left\|U\right\|^2_F\right).$$
The above estimate guarantee that $\Gamma$ maps $B_R(0)\subset F$ into itself if we choose $R=2C\left\|U_0\right\|_{[L^2_b]^2}$ and $T>0$ sufficiently small. Moreover, for this choice of $R$ and $T$ small enough, Lemma \ref{KT} allows us to conclude that $\Gamma$ is a contraction in $F$. Then, by the contraction mapping theorem, we obtain the local existence and uniqueness result. The global existence comes from the a priori bound of the solutions established in the following Lemmas.

\begin{lemma}\label{lema 1}
Let $U_0 \in \left[L^2_b(\RR^+)\right]^2$ and $U$ the solution of \eqref{e1}. Then, for any  $T>0$
\begin{equation}\label{10'}
\begin{array}{l}
\vspace{3mm}\displaystyle\frac{1}{2}\int_{\RR^+} \left(b_2u^2(x,T)+b_1v^2(x,T)-b_2u_0^2(x)-b_1v_0^2(x)dx\right)dx \\
\vspace{3mm}\displaystyle+\frac{1}{2}\int_{0}^T \left(\sqrt{b_2}u_x(0,t)+\sqrt{a_3^2b_2}v_x(0,t)\right)^2dt+\frac{1}{2}(1-a_3^2b_2)\int_{0}^T v_x^2(0,t)dt\\
\vspace{3mm}\displaystyle+\int_0^T\int_{\mathbb{R}^+} a(x)\left(b_2u^2+v^2\right)dxdt\ =\ 0\
\end{array}
\end{equation}
and
\begin{equation}\label{11'}
\begin{array}{l}
\vspace{3mm}\displaystyle\frac{1}{2}\int_{\mathbb{R}^+} \left(b_2u^2(x,T)+b_1v^2(x,T)-b_2u_0^2(x)-b_1v_0^2(x)\right)e^{2bx}dx\\
\vspace{3mm}\displaystyle+6ba_3b_2\int_0^T\int_{\mathbb{R}^+} u_xv_x e^{2bx}dxdt+3b\int_0^T\int_{\mathbb{R}^+} \left(b_2u_x^2+v_x^2\right)e^{2bx}dxdt\\
\vspace{3mm}\displaystyle-4b^3\int_0^T\int_{\mathbb{R}^+} \left(b_2u^2+v^2\right)e^{2bx}dxdt-\frac{2b}{3}\int_0^T\int_{\mathbb{R}^+} \left(b_2u^3+v^3\right)e^{2bx}dxdt\\
\vspace{3mm}\displaystyle-8b^3a_3b_2\int_0^T\int_{\mathbb{R}^+} uve^{2bx}dxdt-2ba_1b_2\int_0^T\int_{\mathbb{R}^+} \left(uv^2+vu^2\right)e^{2bx}dxdt\\
\vspace{3mm}\displaystyle+\frac{1}{2}\int_{0}^T \left(\sqrt{b_2}u_x(0,t)+\sqrt{a_3^2b_2}v_x(0,t)\right)^2dt+\frac{1}{2}(1-a_3^2b_2)\int_{0}^T v_x^2(0,t)dt\\
\vspace{3mm}\displaystyle-br\int_0^T\int_{\mathbb{R}^+} v^2e^{2bx}dxdt+\int_0^T\int_{\mathbb{R}^+} a(x)\left(b_2u^2+v^2\right)e^{2bx}dxdt\ =\ 0\ .
\end{array}
\end{equation}
As a consequence,
\begin{equation}\label{12' b}
\left\|U\right\|_{L^\infty(0,T;[L^2_b]^2)} + \left\|U_x\right\|_{L^2(0,T;[L^2_b]^2)}\leq C\left\|U_0\right\|_{[L^2_b]^2} ,
\end{equation}
where $C=C(T)$ is a positive constant.
\end{lemma}
\begin{proof}
Identity \eqref{10'} is the classical energy dissipation law taken directly from \cite{pazoto-souza}. It is repeated here for the reader's convenience.

Identity \eqref{11'} is obtained multiplying the first equation in \eqref{e1} by $b_2\left(e^{2bx}-1\right)u$, the second one by
$\left(e^{2bx}-1\right)v$ and adding the resulting identities after integrating over $\RR^+\times\left(0,T\right)$. Therefore, the following identities will be needed:

\begin{equation}\label{i1}
b_2\int_0^T\int_{\mathbb{R}^+} (e^{2bx}-1)uu_tdxdt\ =\ \frac{b_2}{2}\int_{\mathbb{R}^+}(e^{2bx}-1)u^2(x,T)dx-\frac{b_2}{2}\int_{\mathbb{R}^+}(e^{2bx}-1)u_0^2(x)dx\
\end{equation}
\begin{equation}\label{i3}
b_2\int_0^T\int_{\mathbb{R}^+} (e^{2bx}-1)u_{xxx}udxdt\ =
\displaystyle 3bb_2\int_0^T\int_{\mathbb{R}^+} u_x^2e^{2bx}dxdt-4b^3b_2\int_0^T\int_{\mathbb{R}^+} u^2e^{2bx}dxdt
\end{equation}

\begin{equation}\label{i11}
\begin{array}{l}
\displaystyle r\int_0^T\int_{\mathbb{R}^+} v_xv(e^{2bx}-1)dxdt = -br\int_0^T\int_{\mathbb{R}^+} v^2e^{2bx}dxdt
\end{array}
\end{equation}

\begin{equation}\label{i9}
\begin{array}{l}
\vspace{3mm}\displaystyle a_3b_2\int_0^T\int_{\mathbb{R}^+} \left(e^{2bx}-1\right)u_{xxx}vdxdt+a_3b_2\int_0^T\int_{\mathbb{R}^+} \left(e^{2bx}-1\right)v_{xxx}udxdt\\
=-8b^3a_3b_2\displaystyle\int_0^T\int_{\mathbb{R}^+} uve^{2bx}dxdt+6ba_3b_2\int_0^T\int_{\mathbb{R}^+} u_xv_xe^{2bx}dxdt.
\end{array}
\end{equation}
Similar conclusions remains valid for the remaining terms $(e^{2bx}-1)vv_t$ and $(e^{2bx}-1)vv_{xxx}$.
Moreover, since $a_1=a_2$, if we put together all the nonlinear terms in \eqref{e1}, it follows that
\begin{equation}\label{i10}
\begin{array}{l}
\vspace{3mm}\displaystyle b_2\int_0^T\int_{\mathbb{R}^+} u^2u_x(e^{2bx}-1)dxdt+a_1b_2\int_0^T\int_{\mathbb{R}^+} (uv)v_x(e^{2bx}-1)dxdt\\
\vspace{3mm}\displaystyle + a_1b_2\int_0^T\int_{\mathbb{R}^+} u(uv)_x(e^{2bx}-1)dxdt +\int_0^T\int_{\mathbb{R}^+} v^2v_x(e^{2bx}-1)dxdt\\
\vspace{3mm}\displaystyle + a_1b_2\int_0^T\int_{\mathbb{R}^+} (uv)u_x(e^{2bx}-1)dxdt + a_1b_2\int_0^T\int_{\mathbb{R}^+} v(uv)_x(e^{2bx}-1)dxdt\\
\vspace{3mm}\displaystyle= \frac{b_2}{3}\int_0^T\int_{\mathbb{R}^+} \left(u^3\right)_x(e^{2bx}-1)dxdt+\frac{1}{3}\int_0^T\int_{\mathbb{R}^+} \left(v^3\right)_x(e^{2bx}-1)dxdt\\
\vspace{3mm}\displaystyle + a_1b_2\int_0^T\int_{\mathbb{R}^+} \left(u^2v\right)_x(e^{2bx}-1)dxdt +\displaystyle a_1b_2\int_0^T\int_{\mathbb{R}^+} \left(v^2u\right)_x(e^{2bx}-1)dxdt\\
\vspace{3mm}\displaystyle= -\frac{2b}{3}\int_0^T\int_{\mathbb{R}^+} (b_2u^3+v^3)e^{2bx}dxdt-2ba_1b_2\int_0^T\int_{\mathbb{R}^+} (u^2v+uv^2)e^{2bx}dxdt.
\end{array}
\end{equation}
Then, taking the identities \eqref{i1}-\eqref{i10} into account, we get
\begin{equation}\label{ident 2}
\begin{array}{l}
\vspace{3mm}\displaystyle\frac{1}{2}\int_{\mathbb{R}^+}(e^{2bx}-1)\left(b_2u^2(x,T)-b_2u_0^2(x)\right)dx+\frac{1}{2}\int_{\mathbb{R}^+}(e^{2bx}-1)\left(b_1v^2(x,T)-b_1v_0^2(x)\right)dx\\
\vspace{3mm}\displaystyle+3b\int_0^T\int_{\mathbb{R}^+} (b_2u_x^2+v_x^2)e^{2bx}dxdt-4b^3\int_0^T\int_{\mathbb{R}^+} (b_2u^2+v^2)e^{2bx}dxdt\\
\vspace{3mm}\displaystyle+6ba_3b_2\int_0^T\int_{\mathbb{R}^+} u_xv_xe^{2bx}dxdt-8b^3a_3b_2\int_0^T\int_{\mathbb{R}^+} uve^{2bx}dxdt\\
\vspace{3mm}\displaystyle-\frac{2b}{3}\int_0^T\int_{\mathbb{R}^+}(b_2u^3+v^3)e^{2bx}dxdt-2ba_1b_2\int_0^T\int_{\mathbb{R}^+}(u^2v+uv^2)e^{2bx}dxdt\\
\vspace{3mm}\displaystyle-br\int_0^T\int_{\mathbb{R}^+} v^2e^{2bx}dxdt+\int_0^T\int_{\mathbb{R}^+} a(x)(b_2u^2+v^2)(e^{2bx}-1)dxdt = 0.
\end{array}
\end{equation}
\\
Adding \eqref{10'} and \eqref{ident 2}, we obtain \eqref{11'}.

Estimate \eqref{12' b} is proved combining \eqref{11'} and Gronwall Lemma. Therefore, the next steps are devoted to estimate the terms in  \eqref{11'}. From H\"{o}lder inequality we obtain
\begin{equation}\label{est 1}
\begin{array}{l}
\vspace{3mm}\displaystyle\left|6ba_3b_2\int_0^T\int_{\mathbb{R}^+} u_xv_xe^{2bx}dxdt\right|\ = \left|\int_0^T\int_{\mathbb{R}^+} \left(\sqrt{3bb_2}u_x\right)\left(2a_3\sqrt{3bb_2}v_x\right)e^{2bx}dxdt\right| \\
\leq
\vspace{3mm}\displaystyle 3b\left(\frac{1}{2}\int_0^T\int_{\mathbb{R}^+} b_2u_x^2e^{2bx}dxdt + 2a_3^2b_2\int_0^T\int_{\mathbb{R}^+} v_x^2e^{2bx}dxdt\right)
\end{array}
\end{equation}
and
\begin{equation}\label{est 1.5}
\left|-8b^3a_3b_2\int_0^T\int_{\mathbb{R}^+} uve^{2bx}dxdt\right|\leq 4b^3\left(a^2_3b_2\int_0^T\int_{\mathbb{R}^+} v^2e^{2bx}dxdt+\int_0^T\int_{\mathbb{R}^+} b_2u^2e^{2bx}dxdt\right).
\end{equation}
In order to estimate the cubic terms we proceed as follows. First, observe that
\begin{equation}\label{estab -3}
\begin{array}{l}
\vspace{3mm}\displaystyle \left|\int_{\mathbb{R}^+} u^2e^{2bx}dx\right| = \left|-\dfrac{1}{b}\int_{\mathbb{R}^+} uu_xe^{2bx}dx\right|\leq \frac{1}{b}\left(\int_{\mathbb{R}^+} u^2e^{2bx}dx\right)^\frac{1}{2}\left(\int_{\mathbb{R}^+} u_x^2e^{2bx}dx\right)^\frac{1}{2},\nonumber
\end{array}
\end{equation}
therefore
\begin{equation}\label{estab -2}
\vspace{3mm}\displaystyle \int_{\mathbb{R}^+} u^2e^{2bx}dx\leq \dfrac{1}{b^2}\int_{\mathbb{R}^+} u_x^2e^{2bx}dx.
\end{equation}
Combining Lemma \ref{desig b} and the above inequality we obtain a positive constant $\tilde{c}$ satisfying
\begin{equation}\label{ipom}
\begin{array}{l}
\vspace{3mm}\displaystyle \|b_2u(x)e^{bx}\|_{L^\infty(\RR^+)}\leq \tilde{c}\,b_2\left\|u_x\right\|_{L^2_b}.
\end{array}
\end{equation}
From \eqref{ipom}, \eqref{10'} and H\"{o}lder inequality we deduce that
\begin{equation}\label{cubico 2}
\begin{array}{l}
\vspace{3mm}\displaystyle \dfrac{2b}{3}\int_0^T\int_{\mathbb{R}^+} b_2u^3e^{2bx}dxdt\leq \dfrac{2bb_2}{3}\int_{0}^T \|u(x)e^{bx}\|_{L^\infty(\RR^+)}\left(\int_{\mathbb{R}^+} u^2e^{bx}dx\right)dt\\
\vspace{3mm}\displaystyle\leq \dfrac{2bb_2}{3}\tilde{c}\int_{0}^T \left\|u_x\right\|_{L^2_b}\left\|u\right\|_{L^2_b}\left\|u\right\|_{L^2}dt \leq b_2\int_{0}^T  \left\|u_x\right\|_{L^2_b}\left(\dfrac{2b}{3}\tilde{c}\left\|u_0\right\|_{L^2}\left\|u\right\|_{L^2_b}\right)dt\\
\displaystyle\leq \delta b_2\left\|u_x\right\|^2_{L^2(0,T;L^2_b)}+c_{1\delta}b_2\left\|u\right\|^2_{L^2(0,T;L^2_b)},
\end{array}
\end{equation}
 for any $\delta>0$, where $c_{1\delta}=c_{1\delta}(\|U_0\|^2_{[L^2]^2})$ is a positive constant. 
Using \eqref{cubico 2} the terms that are cubic in $u, v$ in \eqref{11'} can be estimated as follows
\begin{equation}\label{cubico final}
\begin{array}{l}
\vspace{3mm}\displaystyle 2ba_1b_2\int_0^T\int_{\mathbb{R}^+} \left(u^2v+uv^2\right)e^{2bx}dxdt
\vspace{3mm}\displaystyle +\dfrac{2b}{3}\int_0^T\int_{\mathbb{R}^+} \left(b_2u^3+v^3\right)e^{2bx}dxdt\\
\vspace{3mm}\displaystyle\leq 2\delta\left(b_2\left\|u_x\right\|^2_{L^2(0,T;L^2_b)}+\left\|v_x\right\|^2_{L^2(0,T;L^2_b)}\right)+k_\delta \left(b_2\left\|u\right\|^2_{L^2(0,T;L^2_b)}+\left\|v\right\|^2_{L^2(0,T;L^2_b)}\right),
\end{array}
\end{equation}
for any $\delta > 0$, where $k_\delta$ is positive constant that depends on $\delta$ and $\|U_0\|^2_{[L^2]^2}$.

We can now conclude the result. Indeed, identity \eqref{11'} together with estimates \eqref{est 1}, \eqref{est 1.5}, \eqref{cubico final}
and H\"{o}lder inequality lead to
\begin{equation}\label{est indep}
\begin{array}{l}
\vspace{3mm}\displaystyle\frac{1}{2}\int_{\mathbb{R}^+} \left(b_2u^2+b_1v^2\right)e^{2bx}dx+\frac{1}{2}\int_{\mathbb{R}^+} \left(b_2u_0^2(x)+b_1v_0^2(x)\right)e^{2bx}dx\\
\vspace{3mm}\displaystyle\leq \left[4b^3+4b^3\left(a_3^2b_2+1\right)+br+K_{\delta}\right]\int_0^T\int_{\mathbb{R}^+} \left(b_2u^2+v^2\right)e^{2bx}dxdt\\
\displaystyle+2\delta \int_0^T\int_{\mathbb{R}^+} \left(b_2u_x^2+v_x^2\right)e^{2bx}dxdt+\dfrac{3b}{2}\int_0^T\int_{\mathbb{R}^+} b_2u_x^2e^{2bx}dxdt+6ba_3^2b_2\int_0^T\int_{\mathbb{R}^+} v_x^2e^{2bx}dxdt.
\end{array}
\end{equation}
Then,
\begin{equation}\label{est indep 2}
\begin{array}{l}
\vspace{3mm}\displaystyle\frac{1}{2}\int_{\mathbb{R}^+} \left(b_2u^2+b_1v^2\right)e^{2bx}dx+\left[\frac{3b}{2}-2\delta\right]\int_0^T\int_{\mathbb{R}^+} b_2u_x^2e^{2bx}dxdt\\
\vspace{3mm}\displaystyle+\left[3b\left(1-2a_3^2b_2\right)-2\delta\right]\int_0^T\int_{\mathbb{R}^+} v_x^2e^{2bx}dxdt\leq \displaystyle\frac{1}{2}\int_{\mathbb{R}^+} \left(b_2u_0^2(x)+b_1v_0^2(x)\right)e^{2bx}dx\\
+\left[4b^3+4b^3\left(a_3^2b_2+1\right)+br+k_{\delta}\right]\displaystyle\int_0^T\int_{\mathbb{R}^+} \left(b_2u^2+v^2\right)e^{2bx}dxdt.
\end{array}
\end{equation}
Choosing $\delta<\min\left\{\dfrac{3b}{4},\dfrac{3b\left(1-2a_3^2b_2\right)}{2}\right\}$ we deduce that
\begin{equation}\label{est indep 3}
\begin{array}{l}
\vspace{3mm}\displaystyle\frac{1}{2}\int_{\mathbb{R}^+} \left(b_2u^2(x,T)+b_1v^2(x,T)\right)e^{2bx}dx \\
\displaystyle\leq \frac{\left(b_1+b_2\right)}{2}\left(\left\|u_0\right\|_{L^2_b}+\left\|v_0\right\|_{L^2_b}\right)+\gamma\int_0^T\int_{\mathbb{R}^+} \left(b_2u^2+b_1v^2\right)e^{2bx}dxdt,
\end{array}
\end{equation}
where $\gamma=\gamma\left(||U_0||_{[L^2]^2}\right)>0$. Then, by Gronwall Lemma we have the following bound for the solution
\begin{equation}\label{est indep 4}
\begin{array}{l}
\vspace{3mm}\displaystyle \left\|u(T)\right\|^2_{L^2_b}+\left\|v(T)\right\|^2_{L^2_b}\leq \left(b_1+b_2\right)\left(\left\|u_0\right\|^2_{L^2_b}+\left\|v_0\right\|^2_{L^2_b}\right)e^{2\gamma
T},\;\;\;\forall \;T>0.
\end{array}
\end{equation}
Consequently,
\begin{equation}\label{fim 1}
\left\|U\right\|_{L^\infty(0,T;[L^2_b]^2)}\leq C\left\|U_0\right\|_{[L^2_b]^2},
\end{equation}
where $C=C(T)>0$.  Now, combining \eqref{fim 1} and \eqref{est indep 2} we derive the estimate
\begin{equation}\label{fim 2}
\left\|U_x\right\|_{L^2(0,T;[L^2_b]^2)}\leq C\left\|U_0\right\|_{[L^2_b]^2},
\end{equation}
for some constant $C>0$. Finally, from \eqref{fim 1} and \eqref{fim 2} we obtain \eqref{12' b}.
\end{proof}
Lemma \ref{lema 1} guarantees that the solution exists globally in time. Uniqueness is standard and follows from Gronwall Lemma.
Now the proof of Theorem \ref{nl} is complete.
\end{proof}

\section{Decay}
\setcounter{equation}{0}

The main result of this section is obtained constructing a convenient Lyapunov function which is shown to decrease along trajectories. The proof combines the energy identities proved in the previous section and the result on the exponential decay for \eqref{e1} proved by the authors in \cite{pazoto-souza}. We observe that, since the total energy $E(t)$ is decreasing, their result can be summarized as follows.
\begin{theorem}\label{main-dec} Let $a=a(x)$ be any damping function satisfying \eqref{a}. Then, if $U_0=\left(
\begin{array}[c]{l}
u_0\\
v_0
\end{array}
\right)\in [L^2(\mathbb{R})]^2$ system \eqref{e1} is locally uniformly exponential stable, that is, for any $R>0$, there
exists a positive constant $C=C(R)$ satisfying
\begin{equation}\label{equiv}
\begin{array}{l}
\vspace{1mm}E(0)\leq C \displaystyle\int_0^T\Big[\int_\mathbb{R^+} a(x)(b_2u^2 + v^2) dx +
\displaystyle\frac{1}{2}\left(\sqrt{b_{2}}u_{x}(0,t)+\sqrt{a^{2}_{3}b_{2}}v_{x}(0,t)\right)^{2}\\
\qquad\,\,    +\displaystyle\frac{1}{2}\left(1-a^{2}_{3}b_{2}\right)v^{2}_{x}(0,t)\Big]dt,\nonumber
\end{array}
\end{equation}
provided $E(0)\leq R$.\nonumber
\end{theorem}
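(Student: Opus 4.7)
The plan is first to reduce the statement to an observability-type inequality (the displayed inequality of the theorem), and then to establish this inequality by a compactness--uniqueness argument of the type introduced by J.-L.~Lions for observability problems and adapted to dispersive equations on unbounded domains. First, assume that the displayed observability inequality
\[
E(0)\;\leq\;C\int_0^T \Bigl[\,\int_{\mathbb{R}^+}a(x)(b_2u^2+v^2)\,dx+\tfrac12\bigl(\sqrt{b_2}u_x(0,t)+\sqrt{a_3^2b_2}v_x(0,t)\bigr)^{2}+\tfrac12(1-a_3^2b_2)v_x^2(0,t)\Bigr]\,dt
\]
has been proved. Combining it with the energy dissipation identity \eqref{dissipation} yields $E(T)\leq (1-1/C)E(0)$; iterating on the intervals $[kT,(k+1)T]$ gives $E(kT)\leq(1-1/C)^k E(0)$, hence exponential decay at the rate $\nu=-T^{-1}\log(1-1/C)>0$ depending only on $R$ through $C=C(R)$.

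To prove the observability inequality, argue by contradiction. Suppose it fails for some fixed $R>0$ and $T>0$: there exists a sequence $U_0^n=(u_0^n,v_0^n)$ with $E(U_0^n)\leq R$, such that, if $U_n$ is the corresponding solution,
\[
\alpha_n:=\int_0^T\!\!\!\int_{\mathbb{R}^+}a(x)(b_2u_n^2+v_n^2)\,dx\,dt+\int_0^T\!\!\Bigl[\tfrac12\bigl(\sqrt{b_2}u_{nx}(0,t)+\sqrt{a_3^2b_2}v_{nx}(0,t)\bigr)^2+\tfrac12(1-a_3^2b_2)v_{nx}^2(0,t)\Bigr]dt
\]
satisfies $E(U_0^n)/\alpha_n\to\infty$. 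Normalizing so that $\lambda_n^2:=E(U_0^n)$ is bounded away from $0$ (otherwise one passes at once to $U_n\to 0$), the a priori estimate from Lemma \ref{lema 1} together with the standard $L^2$ energy identity \eqref{10'} gives $\{U_n\}$ bounded in $L^\infty(0,T;[L^2(\mathbb{R}^+)]^2)\cap L^2(0,T;[H^1_{\mathrm{loc}}(\mathbb{R}^+)]^2)$, and the equations provide a bound for $\partial_t U_n$ in $L^2(0,T;[H^{-2}_{\mathrm{loc}}]^2)$. By the Aubin--Lions compactness lemma one extracts a subsequence converging strongly in $L^2(0,T;[L^2_{\mathrm{loc}}(\mathbb{R}^+)]^2)$ to some $U=(u,v)$; the boundary and damping traces pass to the limit by weak lower semicontinuity, forcing
\[
a(x)u\equiv a(x)v\equiv 0,\qquad u_x(0,t)\equiv v_x(0,t)\equiv 0\quad\text{on }(0,T).
\]
In particular $u\equiv v\equiv 0$ on $\omega\supset(0,\delta)\cup(\beta,\infty)$, and $U$ solves the nonlinear system \eqref{e1} (the nonlinearity passes to the limit because the strong $L^2_{\mathrm{loc}}$ convergence combined with the uniform $L^2(0,T;H^1_{\mathrm{loc}})$ bound controls terms such as $u_nu_{nx}$ in $L^1_{\mathrm{loc}}$).

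The main obstacle is the unique continuation step: one must conclude from $u=v=0$ on $(\beta,\infty)$, together with the vanishing boundary data at $x=0$, that $u\equiv v\equiv 0$ on all of $\mathbb{R}^+\times(0,T)$. Since the equations on the interval $(\delta,\beta)$ reduce, after absorbing the nonlinearity into an $L^\infty$ potential (permitted by the embedding $L^2(0,T;H^1_{\mathrm{loc}})\cap L^\infty(0,T;L^2)\hookrightarrow L^\infty_{\mathrm{loc}}$ for scalar traces together with $a_3^2b_2<1$), to a linear coupled KdV system with zero Cauchy data on $\{x=\beta\}$, unique continuation follows from the Carleman estimate for the coupled KdV operator, available, for instance, by combining the Carleman weight of Rosier with the matrix structure of the principal part (the coupling matrix $\bigl(\begin{smallmatrix}1 & a_3\\ a_3b_2 & 1\end{smallmatrix}\bigr)$ is invertible under $a_3^2b_2<1$, so the principal symbol is diagonalizable and the single-equation Carleman estimate applies componentwise). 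This forces $U\equiv 0$, hence $E(U_0^n)\to 0$, contradicting the normalization once one shows, via the same compactness argument, that the convergence $U_n\to U=0$ is actually strong in $L^2(0,T;[L^2(\mathbb{R}^+)]^2)$, which in turn, by the energy identity \eqref{10'} applied backward in time, upgrades to $E(U_0^n)\to 0$. The dependence $C=C(R)$ appears precisely because the potential coming from the nonlinearity is controlled in $L^\infty(0,T;L^2)$ by $R$, so the Carleman constant, and hence $C$, may grow with $R$; no better dependence is obtained by the method.
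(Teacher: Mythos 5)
The paper does not actually prove Theorem \ref{main-dec}: it is quoted from the authors' earlier work \cite{pazoto-souza} and used here as a black box. The only argument the present paper supplies is the two-line remark following the statement, showing that the observability inequality together with \eqref{dissipation} gives $E(T)\leq\gamma E(0)$ with $\gamma<1$ and hence, by the semigroup property, exponential decay; your first paragraph reproduces exactly that remark. So your proposal has to be judged as a reconstruction of the proof in \cite{pazoto-souza}, and the compactness--uniqueness scheme you outline is indeed the standard route for this family of results (it is the method of \cite{mvz}, \cite{vep}, \cite{pazoto} for the scalar equation and of \cite{vep}, \cite{dugan} for coupled systems).

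Within that scheme, however, there are two genuine gaps. First, the unique continuation step is not justified by the argument you give. Diagonalizing the pencil formed by $\mathrm{diag}(1,b_1)$ and the coupling matrix decouples only the principal part, producing two scalar operators $\partial_t+\mu_i\partial_x^3$ with, in general, \emph{different} dispersion coefficients $\mu_1\neq\mu_2$, while the first-order term $rv_x$, the damping, and above all the quadratic nonlinearities remain coupled in the new variables; one therefore needs a Carleman estimate (with a single weight serving both operators) or a unique continuation theorem for the genuine system with rough potentials --- this is precisely the content of the reference \cite{davila} cited in the paper and is the technical heart of \cite{pazoto-souza}, not a routine componentwise corollary of the scalar case. (Also, writing $uu_x$ as a potential times $u_x$ yields a potential in $L^2(0,T;L^\infty_{\mathrm{loc}})$, not $L^\infty$, so the class of potentials for which unique continuation is invoked must be specified.) Second, the contradiction argument is structurally incomplete: from $E(U_0^n)/\alpha_n\to\infty$ and $E(U_0^n)\leq R$ one only gets $\alpha_n\to 0$, and the conclusion $E(U_0^n)\to 0$ you reach at the end is not by itself a contradiction with anything assumed. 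The standard argument must split according to whether $\lambda_n^2=E(U_0^n)$ stays bounded away from zero (the limit solves the nonlinear system and nonlinear unique continuation applies) or tends to zero (one renormalizes $W_n=U_n/\lambda_n$, whose limit solves the \emph{linear} system, applies linear unique continuation, and contradicts the persistence of the normalization $\|W_n\|=1$ obtained from strong convergence). You acknowledge this dichotomy parenthetically but never execute the second branch, which is where the actual contradiction is produced.
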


Indeed, if the above inequality holds, from \eqref{dissipation} we
have that $E(T)\leq \gamma E(0)$ with $0< \gamma <1$, which combined with the semigroup property allow us to derive the exponential decay of $E(t)$.

\vglue 0.2 cm

Now we state our main result. In order to make the reading easier, we split the proof in several steps.

\begin{theorem}\label{teo-dec}
Let $U$
\,be the solution of \eqref{e1} given by Theorem \ref{nl}. Then, for any $R>0$,  there exist constants $C>0$ and $\eta>0$, such that
\begin{equation}\label{alvo}
\displaystyle \left\|U(t)\right\|_{[L_b^2]^2}\leq Ce^{-\eta t}\left\|U_0\right\|_{[L_b^2]^2},\;\;\;\forall t\geq0,
\end{equation}
whenever $\left\|U_0\right\|_{[L_b^2]^2}\leq R$.
\end{theorem}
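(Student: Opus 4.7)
The plan is to construct a Lyapunov functional $\mathcal{L}(U(t))$ equivalent to $\|U(t)\|_{[L^2_b(\mathbb{R}^+)]^2}^2$ for which a contraction estimate $\mathcal{L}(U(T))\leq\gamma\,\mathcal{L}(U_0)$ with $\gamma=\gamma(R)\in(0,1)$ holds on some fixed time horizon $T=T(R)>0$; iterating this bound via the semigroup property of the nonlinear flow given by Theorem \ref{nl} then yields \eqref{alvo} with $\eta=-T^{-1}\ln\gamma$. A natural choice, in the spirit of \cite{pazoto-rosier}, is
$$\mathcal{L}(U(t))=\frac{1}{2}\int_{\mathbb{R}^+}(b_2 u^2+b_1 v^2)e^{2bx}\,dx,$$
possibly with a small lower-order perturbation. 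Differentiating identity \eqref{11'} of Lemma \ref{lema 1} at the upper endpoint yields
$$\frac{d}{dt}\mathcal{L}(U(t))+3b\int_{\mathbb{R}^+}(b_2 u_x^2+v_x^2)e^{2bx}dx+\mathcal{B}(t)+\int_{\mathbb{R}^+}a(x)(b_2 u^2+v^2)e^{2bx}dx=\mathcal{R}(t),$$
where $\mathcal{B}(t)\geq 0$ collects the (nonnegative) boundary terms, and $\mathcal{R}(t)$ gathers the linear growth terms $4b^3\!\int(b_2 u^2+v^2)e^{2bx}+8b^3 a_3 b_2\!\int uv\,e^{2bx}+br\!\int v^2 e^{2bx}$, the bilinear coupling $-6b a_3 b_2\!\int u_x v_x e^{2bx}$, and the cubic contributions $\frac{2b}{3}\!\int(b_2 u^3+v^3)e^{2bx}+2b a_1 b_2\!\int(u^2 v+u v^2)e^{2bx}$.

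The first step is to control $\mathcal{R}(t)$. The bilinear coupling is absorbed into the $u_x,v_x$ dissipation by Cauchy--Schwarz and the smallness assumption $a_3^2 b_2\ll 1$, as already done in \eqref{est 1}; the cubic terms are handled by combining the weighted Gagliardo--Nirenberg bound of Lemma \ref{desig b} with the uniform bound $\|U(t)\|_{[L^2(\mathbb{R}^+)]^2}^2\leq 2E(0)\leq 2R$ provided by \eqref{dissipation}, following exactly the pattern of \eqref{cubico 2}--\eqref{cubico final}. The outcome is an estimate $|\mathcal{R}(t)|\leq \delta\,\|U_x(t)\|_{[L^2_b]^2}^2+K(R)\,\mathcal{L}(U(t))$ for arbitrary $\delta>0$; choosing $\delta$ small enough to absorb the $\delta$-term into the gradient dissipation, I would arrive at
$$\frac{d}{dt}\mathcal{L}(U(t))+c_0\!\int_{\mathbb{R}^+}(b_2 u_x^2+v_x^2)e^{2bx}dx+\mathcal{B}(t)+\int_{\mathbb{R}^+}a(x)(b_2 u^2+v^2)e^{2bx}dx\leq K(R)\,\mathcal{L}(U(t)).$$

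The decisive step is to bound $\mathcal{L}(U_0)$ from above by the time-integral of the dissipation on the left. Because $\omega\supset(\beta,\infty)$ and $e^{2bx}\geq e^{2b\beta}$ there, the weighted damping dominates the unweighted one on $(\beta,\infty)$; on $(0,\beta)$ the weight is bounded so weighted and unweighted $L^2$-norms of $U$ are comparable up to a fixed constant. Combined with the unweighted observability inequality of Theorem \ref{main-dec} and the resulting exponential decay of $E(t)$, this should yield a weighted observability estimate of the form
$$\mathcal{L}(U_0)\leq C(R)\int_0^T\!\Big[\mathcal{B}(t)+\int_{\mathbb{R}^+}a(x)(b_2 u^2+v^2)e^{2bx}dx\Big]dt.$$
Integrating the previous differential inequality on $[0,T]$ and using this estimate to absorb $\mathcal{L}(U_0)$ on the right, the choice of $T=T(R)$ sufficiently large tames the Gronwall factor $e^{K(R)T}$ and delivers $\mathcal{L}(U(T))\leq\gamma\mathcal{L}(U_0)$ with $\gamma<1$; \eqref{alvo} then follows from the semigroup property.

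The hard part will be precisely this transfer from unweighted to weighted observability: Theorem \ref{main-dec} only controls $\int a(x)(b_2 u^2+v^2)dx$ integrated in time, whereas the Lyapunov dissipation naturally produces its weighted counterpart. The structure of $\omega$ imposed in \eqref{a} (containing both a neighbourhood of $0$ and a half-line $(\beta,\infty)$) is essential here. Moreover, the coupling $a_3 b_2$ and the quadratic nonlinearities require finer bookkeeping than in the scalar case of \cite{pazoto-rosier}: in particular the absorption of $\int u_x v_x e^{2bx}dx$ without destroying the leading sign of the gradient dissipation is where the hypothesis $a_3^2 b_2\ll 1$ is crucial, and the $R$-dependence of $K(R)$ is precisely what constrains the decay rate $\eta$ to depend on the radius $R$.
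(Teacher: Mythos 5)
Your overall architecture coincides with the paper's: a Lyapunov functional built on the weighted norm, the identity \eqref{11'}, absorption of the coupling term $\int u_xv_xe^{2bx}$ into the gradient dissipation via $a_3^2b_2\ll1$ as in \eqref{est 1}, the cubic terms via the weighted Gagliardo--Nirenberg bound and the decay of $E(t)$ as in \eqref{cubico final}, the unweighted observability inequality of Theorem \ref{main-dec}, and iteration by the semigroup property. However, two load-bearing steps are either missing or argued by a mechanism that does not work. First, your treatment of the growth term: after writing $|\mathcal{R}(t)|\leq\delta\|U_x\|^2_{[L^2_b]^2}+K(R)\,\mathcal{L}(U(t))$ you propose to ``tame the Gronwall factor $e^{K(R)T}$ by choosing $T$ large'', which goes the wrong way --- Gronwall gives $\mathcal{L}(U(T))\leq e^{K(R)T}\mathcal{L}(U_0)-\int_0^T D\,dt$, and combined with an observability bound $\mathcal{L}(U_0)\leq C\int_0^T D\,dt$ this yields a contraction only if $e^{K(R)T}<1+C^{-1}$, i.e.\ for $T$ \emph{small} (where $C(T)$ typically degenerates). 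Nor can $K(R)\mathcal{L}$ be absorbed by the gradient dissipation through the weighted Poincar\'e inequality \eqref{estab -2} alone, since the growth coefficient $\rho_b\geq 6b^3$ always exceeds the available margin $3b\cdot b^2$. The paper avoids Gronwall entirely: it keeps the growth term as $\rho_b\int\!\!\int(b_2u^2+v^2)e^{2bx}$, splits the spatial integral at a point $x_0$, controls the bounded piece by the \emph{unweighted} observability inequality at the price of a factor $e^{2bx_0}$ (estimate \eqref{estab8}), and absorbs the unbounded piece into the damping using $a(x)\geq a_0>\rho_b$ on $(x_0,\infty)$ (estimate \eqref{estab9}). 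This is precisely where the hypothesis that $a_0$ is \emph{sufficiently large} in \eqref{a} enters, and where the Lyapunov functional \eqref{Lyapunov func} needs the extra term $c_b\int(b_2u^2+b_1v^2)dx$ with $c_b>\rho_bC$ \emph{large} (not a small perturbation), so that the unweighted damping produced by multiplying \eqref{10'} by $2c_b$ can swallow the term $\rho_bC\int_0^T\!\!\int a(x)(b_2u^2+v^2)dxdt$ coming out of the observability inequality. None of these largeness requirements appears in your proposal.

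Second, your ``weighted observability estimate'' $\mathcal{L}(U_0)\leq C(R)\int_0^T[\mathcal{B}(t)+\int a(x)(b_2u^2+v^2)e^{2bx}dx]dt$ is asserted, not proved, and in the stated form (with only boundary and damping terms on the right) it is stronger than what the paper establishes and than what the domain-splitting heuristic you invoke can deliver, since $\omega$ may omit the bounded interval $(\delta,\beta)$. What the paper actually proves (Lemma \ref{claim7}) is the weaker inequality \eqref{c8}, in which the right-hand side also contains $\int_0^T\!\!\int(b_2u_x^2+v_x^2)e^{2bx}dxdt$ and $\int_0^T\mathcal{L}(U)(t)dt$; this is obtained by the multiplier $(T-t)e^{2bx}U$ --- an ingredient absent from your sketch --- and the term $\int_0^T\mathcal{L}\,dt$ is then removed using the weighted Poincar\'e inequality \eqref{c7}. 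Since the weighted gradient is part of the dissipation retained in your differential inequality, this gap is repairable, but as written the decisive estimate is not justified.
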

\begin{proof}
In order to obtain the result, we consider the Lyapunov function
\begin{equation}\label{Lyapunov func}
\mathcal{L}(U)(t) = \frac{1}{2}\int_{\mathbb{R}^+} \left(b_2u^2+b_1v^2\right)e^{2bx}dx+c_b\int_{\mathbb{R}^+} \left(b_2u^2+b_1v^2\right)dx,
\end{equation}
where $c_b > 0$ is constant to be chosen later. Then, to obtain \eqref{alvo} it is sufficient to prove that
\begin{equation}\label{semig-fin}
\mathcal{L}(U)-\mathcal{L}(U_0) \leq -c\mathcal{L}(U_0),
\end{equation}
for some positive constant $c$.

We multiply \eqref{10'} by $2c_b$ and add the resulting identity to \eqref{11'} to arrive at the relation
\begin{equation}\label{estab1}
\begin{array}{l}
\vspace{3mm}\displaystyle \mathcal{L}(U)-\mathcal{L}(U_0)\\
\vspace{3mm}\displaystyle =-c_b\int_{0}^T \left(\sqrt{b_2}u_x(0,t)+\sqrt{a_3^2b_2}v_x(0,t)\right)^2dt-c_b(1-a_3^2b_2)\int_{0}^T v_x^2(0,t)dt\\
\vspace{3mm}\displaystyle - \int_0^T\int_{\mathbb{R}^+} a(x)\left(b_2u^2+v^2\right)\left(c_b+e^{2bx}\right)dxdt - 6ba_3b_2\int_0^T\int_{\mathbb{R}^+} u_xv_xe^{2bx}dxdt\\
\vspace{3mm}\displaystyle - 3b\int_0^T\int_{\mathbb{R}^+} \left(b_2u_x^2+v_x^2\right)e^{2bx}dxdt+4b^3\int_0^T\int_{\mathbb{R}^+} \left(b_2u^2+v^2\right)e^{2bx}dxdt\\
\vspace{3mm}\displaystyle +\dfrac{2b}{3}\int_0^T\int_{\mathbb{R}^+} \left(
b_2u^3+v^3\right)e^{2bx}dxdt + 8b^3a_3b_2\int_0^T\int_{\mathbb{R}^+} uve^{2bx}dxdt\\
\vspace{3mm}\displaystyle +2ba_1b_2\int_0^T\int_{\mathbb{R}^+} \left(uv^2+vu^2\right)e^{2bx}dxdt-\dfrac{1}{2}\int_{0}^T \left(\sqrt{b_2}u_x(0,t)+\sqrt{a_3^2b_2}v_x(0,t)\right)^2dt \\
\displaystyle-\frac{1}{2}\left(1-a_3^2b_2\right)\int_{0}^T v_x^2(0,t)dt+br\int_0^T\int_{\mathbb{R}^+} v^2e^{2bx}dxdt.
\end{array}
\end{equation}
Then, from H\"{o}lder inequality, \eqref{est 1}, \eqref{est 1.5} and \eqref{cubico final} we obtain

\begin{equation}\label{estab2}
\begin{array}{l}
\vspace{3mm}\displaystyle \mathcal{L}(U)-\mathcal{L}(U_0)\\
\vspace{3mm}\displaystyle\leq -c_b\int_{0}^T \left(\sqrt{b_2}u_x(0,t)+\sqrt{a_3^2b_2}v_x(0,t)\right)^2dt -c_b\left(1-a_3^2b_2\right)\int_{0}^T v_x^2(0,t)dt\\
\vspace{3mm}\displaystyle -\int_0^T\int_{\mathbb{R}^+} a(x)\left(b_2u^2+v^2\right)\left(c_b+e^{2bx}\right)dxdt+\dfrac{3b}{2}\int_0^T\int_{\mathbb{R}^+} b_2u_x^2e^{2bx}dxdt\\
\vspace{3mm}\displaystyle+6ba_3^2b_2\int_0^T\int_{\mathbb{R}^+} v_x^2e^{2bx}dxdt+4b^3a_3^2b_2\int_0^T\int_{\mathbb{R}^+} v^2e^{2bx}dxdt+4b^3\int_0^T\int_{\mathbb{R}^+} b_2u^2e^{2bx}dxdt\\
\vspace{3mm}\displaystyle+k_{\delta}\int_0^T\int_{\mathbb{R}^+} \left(b_2u^2+v^2\right)e^{2bx}dxdt+2\delta\int_0^T\int_{\mathbb{R}^+} \left(b_2u_x^2+v_x^2\right)e^{2bx}dxdt\\
\vspace{3mm}\displaystyle+4b^3\int_0^T\int_{\mathbb{R}^+} \left(b_2u^2+v^2\right)e^{2bx}dxdt-3b\int_0^T\int_{\mathbb{R}^+} \left(b_2u_x^2+v_x^2\right)e^{2bx}dxdt\\
\vspace{3mm}\displaystyle-\dfrac{1}{2}\int_{0}^T \left(\sqrt{b_2}u_x(0,t)+\sqrt{a_3^2b_2}v_x(0,t)\right)^2dt-\dfrac{1}{2}\left(1-a_3^2b_2\right)\int_{0}^T v_x^2(0,t)dt\\
\vspace{3mm}\displaystyle+br\int_0^T\int_{\mathbb{R}^+} v^2e^{2bx}dxdt.
\end{array}
\end{equation}
Consequently, for any $\delta > 0$, \eqref{estab2} leads to the inequality
\begin{equation}\label{estab2.1}
\begin{array}{l}
\vspace{3mm}\displaystyle \mathcal{L}(U)-\mathcal{L}(U_0)\\
\vspace{3mm}\displaystyle\leq -\left(c_b+\frac{1}{2}\right)\int_{0}^T \left(\sqrt{b_2}u_x(0,t)+\sqrt{a_3^2b_2}v_x(0,t)\right)^2dt-\left(c_b+\frac{1}{2}\right)\left(1-a_3^2b_2\right)\int_{0}^T v_x^2(0,t)dt+\\
\vspace{3mm}\displaystyle-\int_0^T\int_{\mathbb{R}^+} a(x)\left(b_2u^2+v^2\right)\left(c_b+e^{2bx}\right)dxdt-\left(\dfrac{3b}{2}-2\delta\right)\int_0^T\int_{\mathbb{R}^+} b_2u_x^2e^{2bx}dxdt\\
\vspace{3mm}\displaystyle-\left(3b\left(1-2a_3^2b_2\right)-2\delta\right)\int_0^T\int_{\mathbb{R}^+} v_x^2e^{2bx}dxdt\\
\vspace{3mm}\displaystyle+\left[4b^3\left(1+a_3^2b_2\right)+br+k_{\delta}\right]\int_0^T\int_{\mathbb{R}^+} \left(b_2u^2+v^2\right)e^{2bx}dxdt,
\end{array}
\end{equation}
where $k_\delta$ is a positive constant that depends on $\delta$ and $\|U_0\|^2_{[L^2]^2}$. Now, we observe that $4b^3\left(1+a_3^2b_2\right)<6b^3$, therefore if we introduce the notation
\begin{equation}\label{estab2.5}
\rho_b=\left[6b^3+br+k_{\delta}\right],
\end{equation}\nonumber
from \eqref{estab2.1} we can deduce that
\begin{equation}\label{estab6}
\begin{array}{l}
\vspace{3mm}\displaystyle \mathcal{L}(U)-\mathcal{L}(U_0)\\
\vspace{3mm}\displaystyle\leq -\left(c_b+\frac{1}{2}\right)\int_{0}^T \left(\sqrt{b_2}u_x(0,t)+\sqrt{a_3^2b_2}v_x(0,t)\right)^2dt -\left(c_b+\frac{1}{2}\right)\left(1-a_3^2b_2\right)\int_{0}^T v_x^2(0,t)dt\\
\vspace{3mm}\displaystyle -\int_0^T\int_{\mathbb{R}^+} a(x)\left(b_2u^2+v^2\right)\left(c_b+e^{2bx}\right)dxdt+\rho_b\int_0^T\int_{\mathbb{R}^+} \left(b_2u^2+v^2\right)e^{2bx}dxdt\\
\vspace{3mm}\displaystyle -\left(\dfrac{3b}{2}-2\delta\right)\int_0^T\int_{\mathbb{R}^+} b_2u_x^2e^{2bx}dxdt-\left(3b\left(1-2a_3^2b_2\right)-2\delta\right)\int_0^T\int_{\mathbb{R}^+} v_x^2e^{2bx}dxdt.
\end{array}
\end{equation}
On the other hand, Theorem \ref{main-dec} guarantees the existence of a constant $C>0$ satisfying
\begin{equation}\label{estab8}
\begin{array}{l}
\vspace{3mm}\displaystyle \rho_b\int_{0}^T \int_0^{x_0} \left(b_2u^2+v^2\right)e^{2bx}dxdt\leq e^{2bx_0}\rho_b\int_{0}^T \int_0^{x_0} \left(b_2u^2+v^2\right)dxdt\\
\vspace{3mm}\displaystyle \leq \rho_b C\left\{\frac{1}{2}\int_{0}^T \left(\sqrt{b_2}u_x(0,t)+\sqrt{a_3^2b_2}v_x(0,t)\right)^2dt+\frac{1}{2}\left(1-a_3^2b_2\right)\int_{0}^T v_x^2(0,t)dt\right.\\
\vspace{3mm}\displaystyle\left.+\int_0^T\int_{\mathbb{R}^+} a(x)\left(b_2u^2+v^2\right)dxdt\right\}.
\end{array}
\end{equation}
Moreover, since $-a(x) \geq -a_0$ for $x \geq x_0$, it follows that
\begin{equation}\label{estab9}
\begin{array}{l}
\vspace{3mm}\displaystyle -\int_0^T\int_{\mathbb{R}^+} a(x)\left(b_2u^2+v^2\right)e^{2bx}dxdt\\
\vspace{3mm}\displaystyle= -\int_{0}^T \int_0^{x_0} a(x)\left(b_2u^2+v^2\right)e^{2bx}dxdt-\int_{0}^T \int_{x_0}^{+\infty}  a(x)\left(b_2u^2+v^2\right)e^{2bx}dxdt\\
\vspace{3mm}\displaystyle\leq -\int_{0}^T \int_0^{x_0} a(x)\left(b_2u^2+v^2\right)e^{2bx}dxdt-a_0\int_{0}^T \int_{x_0}^{+\infty}  \left(b_2u^2+v^2\right)e^{2bx}dxdt.
\end{array}
\end{equation}
Using \eqref{estab8} and \eqref{estab9} in \eqref{estab6} we obtain the estimate
\begin{equation}
\begin{array}{l}
\vspace{3mm}\displaystyle \mathcal{L}(U)-\mathcal{L}(U_0)\\
\vspace{3mm}\displaystyle \leq -\left(c_b+\dfrac{1}{2}-\dfrac{\rho_b}{2}C\right)\left(\int_{0}^T \left(\sqrt{b_2}u_x(0,t)+\sqrt{a_3^2b_2}v_x(0,t)\right)^2dt+\left(1-a_3^2b_2\right)\int_{0}^T v_x^2(0,t)dt\right)\\
\vspace{3mm}\displaystyle -\left(c_b-\rho_b C\right)\int_0^T\int_{\mathbb{R}^+} a(x)\left(b_2u^2+v^2\right)dxdt-\left(a_0-\rho_b\right)\int_{0}^T \int_{x_0}^{+\infty} \left(b_2u^2+v^2\right)e^{2bx}dxdt\\
\vspace{3mm}\displaystyle-\int_{0}^T \int_0^{x_0} a(x)\left(b_2u^2+v^2\right)e^{2bx}dxdt-\left(\dfrac{3b}{2}-2\delta\right)\int_0^T\int_{\mathbb{R}^+} b_2u_x^2e^{2bx}dxdt\\
\displaystyle-\left(3b\left(1-2a_3^2b_2\right)-2\delta\right)\int_0^T\int_{\mathbb{R}^+} v_x^2e^{2bx}dxdt,\nonumber
\end{array}
\end{equation}
from which we conclude that
\begin{equation}\label{estab10}
\begin{array}{l}
\vspace{3mm}\displaystyle \mathcal{L}(U)-\mathcal{L}(U_0)\\
\displaystyle\leq -k\left\{\int_{0}^T \left(\sqrt{b_2}u_x(0,t)+\sqrt{a_3^2b_2}v_x(0,t)\right)^2dt+\left(1-a_3^2b_2\right)\int_{0}^T v_x^2(0,t)dt+\right.\\
\displaystyle \left.\int_0^T\int_{\mathbb{R}^+} a(x)\left(b_2u^2+v^2\right)dxdt+\int_0^T\int_{\mathbb{R}^+}\left(b_2u_x^2+v_x^2\right)e^{2bx}dxdt\right\},
\end{array}
\end{equation}
where $k$ is a positive constant. Indeed, first we fix $\delta<\min\left\{\dfrac{3b}{4},\dfrac{3b\left(1-2a_3^2b_2\right)}{2}\right\}$ as in the proof of \eqref{12' b}. Then, we consider $c_b$ and $a_0$ satisfying
$$ a_0>\rho_b \mbox{ and } c_b>\rho_bC.$$
Finally, the proof of \eqref{semig-fin} is a consequence of \eqref{estab10} and the following results:
\begin{lemma}\label{claim7}
There exists a constant $C>0$, such that
\begin{equation}\label{c7}
\int_{0}^T \mathcal{L}(U)(t)dt\leq C \int_0^T\int_{\mathbb{R}^+} \left(b_2u^2_x+v^2_x\right)e^{2bx}dxdt.
\end{equation}
and
\begin{equation}\label{c8}
\begin{array}{l}
\vspace{3mm}\displaystyle \mathcal{L}(U_0)\leq C\left\{\int_{0}^T\left(\sqrt{b_2}u_x(0,t)+\sqrt{a_3^2b_2}v_x(0,t)\right)^2dt \right.\\
\vspace{3mm}\displaystyle \left.+\left(1-a_3^2b_2\right)\int_{0}^T v_x^2(0,t)dt+\int_0^T\int_{\mathbb{R}^+}
\left(b_2u_x^2+v_x^2\right)e^{2bx}dxdt+\int_{0}^T \mathcal{L}(U)(t)dt\right\}.
\end{array}
\end{equation}
\end{lemma}
\begin{proof} Inequality \eqref{c7} is a consequence of \eqref{estab -2}:
\begin{equation}\label{c7'}
\begin{array}{l}
\vspace{3mm}\displaystyle \int_{0}^T \mathcal{L}(U)(t)dt\leq \left(c_b+\dfrac{1}{2}\right)\int_0^T\int_{\mathbb{R}^+} \left(b_2u^2+b_1v^2\right)e^{2bx}dxdt\\
\vspace{3mm}\displaystyle\leq \dfrac{1}{b^2}\left(c_b+\dfrac{1}{2}\right)\left(1+b_1\right)\int_0^T\int_{\mathbb{R}^+} \left(b_2u_x^2+v_x^2\right)e^{2bx}dxdt.
\end{array}
\end{equation}
To obtain \eqref{c8} we proceed exactly as in the proof of \eqref{11'}. We multiply the first equation in \eqref{e1} by $\left(T-t\right)ue^{2bx}$, the second one by $\left(T-t\right)ve^{2bx}$ and add the resulting identities. After some integrations by parts  over $\RR^+\times\left(0,T\right)$, there appears the relation
\begin{equation}\label{c8 1}
\begin{array}{l}
\vspace{3mm}\displaystyle \frac{T}{2} \int_{\mathbb{R}^+} \left(b_2u_0^2(x)+b_1v_0^2(x)\right)e^{2bx}dx\\
\vspace{3mm}\displaystyle=\dfrac{1}{2}\int_0^T\int_{\mathbb{R}^+} \left(b_2u^2+b_1v^2\right)e^{2bx}dxdt-3b\int_0^T\int_{\mathbb{R}^+} \left(b_2u_x^2+v_x^2\right)\left(T-t\right)e^{2bx}dxdt\\
\vspace{3mm}\displaystyle+\frac{2b}{3}\int_0^T\int_{\mathbb{R}^+} \left(b_2u^3+v^3\right)\left(T-t\right)e^{2bx}dxdt\\
\vspace{3mm}\displaystyle-\dfrac{1}{2}\int_{0}^T \left(\sqrt{b_2}u_x(0,t)+\sqrt{a_3^2b_2}v_x(0,t)\right)^2\left(T-t\right)dt+\frac{1}{2}(1-a_3^2b_2)\int_{0}^T v_x^2(0,t)\left(T-t\right)dt\\
\vspace{3mm}\displaystyle-6ba_3b_2\int_0^T\int_{\mathbb{R}^+} u_xv_x\left(T-t\right) e^{2bx}dxdt+8b^3a_3b_2\int_0^T\int_{\mathbb{R}^+} uv\left(T-t\right)e^{2bx}dxdt\\
\vspace{3mm}\displaystyle-2ba_1b_2\int_0^T\int_{\mathbb{R}^+} \left(uv^2+vu^2\right)\left(T-t\right)e^{2bx}dxdt\\
\vspace{3mm}\displaystyle+br\int_0^T\int_{\mathbb{R}^+} v^2\left(T-t\right)e^{2bx}dxdt-\int_0^T\int_{\mathbb{R}^+} a(x)\left(b_2u^2+v^2\right)\left(T-t\right)e^{2bx}dxdt\\
\vspace{3mm}\displaystyle +\int_0^T\int_{\mathbb{R}^+} a(x)\left(b_2u^2+v^2\right)\left(T-t\right)dxdt.
\end{array}
\end{equation}
Now, we multiply the identity above by $T^{-1}$ and observe that $\frac{T-t}{T} < 1$. Then, using \eqref{a} and \eqref{cubico final} we can estimate straightforwardly in \eqref{c8 1} to obtain the inequality
\begin{equation}\label{c8 3}
\begin{array}{l}
\vspace{3mm}\displaystyle \dfrac{1}{2}\int_{\mathbb{R}^+} \left(b_2u_0^2(x)+b_1v_0^2(x)\right)e^{2bx}dx\\
\vspace{3mm}\displaystyle\leq \left(\dfrac{1}{2T}+4b^3\left|a_3\right|^2b_2\left(1+\dfrac{1}{b_1}\right)+br+2\left\|a\right\|_{L^\infty}\left(1+\dfrac{1}{b_1}\right)\right)\int_0^T\int_{\mathbb{R}^+} \left(b_2u^2+b_1v^2\right)e^{2bx}dxdt\\
\vspace{3mm}\displaystyle+\left(3b+2\delta+3b\left|a_3\right|^2b_2\right)\int_0^T\int_{\mathbb{R}^+} \left(b_2u_x^2+v_x^2\right)e^{2bx}dxdt\\
\displaystyle +\dfrac{1}{2}\int_{0}^T \left(\sqrt{b_2}u_x(0,t)+\sqrt{a_3^2b_2}v_x(0,t)\right)^2dt+\frac{1}{2}(1-a_3^2b_2)\int_{0}^T v_x^2(0,t)dt.
\end{array}
\end{equation}
From the definition of the function $\mathcal{L}=\mathcal{L}(t)$, we can conclude that
\begin{equation}\label{c8 4}
\begin{array}{l}
\vspace{3mm}\displaystyle \dfrac{1}{2}\int_{\mathbb{R}^+} \left(b_2u_0^2(x)+b_1v_0^2(x)\right)e^{2bx}dx\\
\vspace{3mm}\displaystyle \leq C'\left\{\int_{0}^T\left(\sqrt{b_2}u_x(0,t)+\sqrt{a_3^2b_2}v_x(0,t)\right)^2dt+\left(1-a_3^2b_2\right)\int_{0}^T v_x^2(0,t)dt \right.\\
\vspace{3mm}\displaystyle \left.+\int_0^T\int_{\mathbb{R}^+} \left(b_2u_x^2+v_x^2\right)e^{2bx}dxdt+\int_{0}^T \mathcal{L}(U)(t)dt\right\},
\end{array}
\end{equation}
where $C'$ is a positive constant. Since
$$\mathcal{L}(U_0)\leq \left(\dfrac{1}{2}+c_b\right) \int_{\mathbb{R}^+} \left(b_2u_0^2(x)+b_1v_0^2(x)\right)e^{2bx}dx,$$
the result follows.
\end{proof}
Now the proof of Theorem \eqref{teo-dec} is complete.
\end{proof}

\

\noindent{\bf Acknowledgments:} AFP author was partially
supported by CNPq (Brazil). GRS was partially supported by Capes (Brazil).

\label{referencias}

\end{document}